\newcommand{\eqn}[1]{\begin{displaymath}\begin{split}#1\end{split}\end{displaymath}}
\newtheorem{theorem}{Theorem}
\newtheorem{corollary}{Corollary}
\newtheorem{proposition}{Proposition}
\newtheorem{lemma}{Lemma}
\theoremstyle{definition}
\theoremstyle{definition}
\newtheorem{definition}{Definition}
\theoremstyle{definition}
\theoremstyle{definition}
\newtheorem{example}{Example}
\theoremstyle{definition}
\newtheorem{remark}{Remark}
\theoremstyle{definition}
\newcommand{\defn}[1]{\emph{#1}}
\newcommand{\N}{\mathbb{N}}
\newcommand{\C}{\mathbb{C}}
\newcommand{\R}{\mathbb{R}}
\newcommand{\Q}{\mathbb{Q}}
\newcommand{\Nminus}{\mathbb{N}\setminus\{0\}}
\newcommand{\dotminus}{\mathbin{\mathrm{\@dotminus}}}
\newcommand{\@dotminus}{%
  \ooalign{\hidewidth\raise1ex\hbox{.}\hidewidth\cr$\m@th-$\cr}%
}
\newcommand{\dotsub}{\dotminus}
\newcommand{\pred}{\mathcal{P}}
\newcommand{\fun}{\mathcal{F}}
\newcommand{\con}{\mathcal{C}}
\renewcommand{\d}{\underline{d}}
\newcommand{\var}{\mathcal{V}}
\newcommand{\half}{\frac{1}{2}}
\newcommand{\bic}{\leftrightarrow}
\newcommand{\dyad}{\ensuremath{\mathrm{Dyad}}}
\newcommand{\p}{\underline{p}}
\newcommand{\q}{\underline{q}}
\renewcommand{\u}{\underline{u}}
\newcommand{\M}{\mathfrak{M}}
\newcommand{\Nstruct}{\mathfrak{N}}
\newcommand{\X}{\mathfrak{X}}
\newcommand{\ran}{\mathrm{ran}}
\newcommand{\id}{\mathrm{id}}
\newcommand{\href}{\text{ }}
\title[Generalized effective completeness for continuous logic]{Generalized effective completeness for continuous logic}
\author[Caleb Camrud]{Caleb Camrud}
\address{Department of Mathematics\\
Iowa State University\\\newline
Carver Hall\\ 411 Morrill Rd.\\ Ames, IA 50014\\ USA}
\email{ccamrud@iastate.edu}
\urladdr{https://cmhcamrud.org}
\begin{document}
\begin{abstract}
In this paper, we present a generalized effective completeness theorem for continuous logic. The primary result is that any continuous theory is satisfied in a structure which admits a presentation of the same Turing degree. It then follows that any decidable theory is satisfied by a computably presentable structure. This modifies and extends previous partial effective completeness theorems for continuous logic given by Calvert and Didehvar, Ghasemloo, and Pourmahdian.
\end{abstract}


\maketitle

\section{Introduction}

Completeness results relate theories to structures. \emph{Effective} completeness results relate \emph{decidable} theories to \emph{computable} structures. The first such result was given by Millar in \cite{Millar.1978}. But the method provided in that manuscript only applies to classical logic and classically computable structures, and hence cannot be directly applied to continuous logic and uncountable structures.

In \cite{BenYaacov.Berenstein.Henson.Usvyatsov.2008}, Ben Yaacov \textit{et al.} developed a model theory for metric structures using continuous first-order logic, and a completeness result was proven in \cite{BenYaacov.Pedersen.2010}. Calvert then extended this result to an effective version of completeness, relating decidable theories in continuous logic to probabilistically decidable structures.

\begin{theorem}[Theorem 4.5, \cite{Calvert.2011}]
    Let $T$ be a complete, decidable, continuous first-order theory. Then there is a probablistically decidable, continuous weak structure $\M$ such that $\M\vDash T$.
\end{theorem}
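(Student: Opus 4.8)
The plan is to run an effective version of the Henkin construction behind the Ben Yaacov--Pedersen completeness theorem. First I would expand the language $L$ of $T$ to $L^+$ by adjoining a countable set $\{c_i : i \in \omega\}$ of fresh constant symbols to serve as Henkin witnesses, and then build, in stages, a \emph{complete, decidable} $L^+$-theory $T^+ \supseteq T$ with an approximate witnessing property: for each $L^+$-formula $\varphi(x)$ and each rational $\epsilon>0$ there is some $c_i$ with $v_{T^+}(\varphi(c_i))$ within $\epsilon$ of $v_{T^+}(\inf_x \varphi(x))$. Throughout, ``decidable theory'' is meant in the continuous sense --- the assignment $\sigma \mapsto v_{T^+}(\sigma) \in [0,1]$ of truth values to sentences is uniformly computable as a real number --- and ``complete'' means $v_{T^+}$ is a genuine total assignment rather than a partial specification. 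Granting such a $T^+$, the term model --- closed $L^+$-terms, with $d$ interpreted by $d(t,s) := v_{T^+}(d(t,s))$, each function symbol interpreted syntactically, and each predicate $P$ interpreted at a tuple $\bar t$ by $v_{T^+}(P(\bar t))$, everything divided out by the pseudometric --- is a continuous weak structure (prestructure); a Tarski-style induction on connectives, continuous connectives, and infima (the infimum clause using the witnessing property) yields $\varphi^{\M}(\bar t) = v_{T^+}(\varphi(\bar t))$ for all $L^+$-formulas, so $\M \vDash T^+ \supseteq T$.

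The heart of the matter is constructing $T^+$ effectively. Fix computable enumerations of the $L^+$-sentences and of the pairs $(\varphi(x),\epsilon)$ with $\varphi$ an $L^+$-formula in one free variable and $\epsilon$ a positive rational. Because $T$ is already \emph{complete}, no choices about the values of $L$-sentences are needed --- these are handed over by the decision procedure for $T$ --- so the only tasks are to interpret the new constants and to witness infima. I would proceed through stages $T_0 \subseteq T_1 \subseteq \cdots$, each $T_n$ a finite amount of data extending $T$ consistently (rational approximations to the values of finitely many $L^+$-sentences, involving finitely many of the $c_i$). At stage $n$ one serves the next outstanding witnessing requirement $(\varphi(x),\epsilon)$ whose parameters already occur: adjoin a new constant $c$, pin $v(\varphi(c))$ near $v(\inf_x \varphi(x))$, and record enough further values to keep the description complete to the current precision. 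Consistency of each such extension with $T$ reduces, by completeness and the continuous analogue of the finite-satisfiability/compactness criterion, to an inequality about $v_T$ that is decidable; and the fact that an $\epsilon$-optimal witness for an infimum is always available in a consistent complete continuous theory makes the witnessing step legitimate. Then $T^+ = \bigcup_n T_n$ is complete, decidable, and has the approximate witnessing property.

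The main obstacle is the bookkeeping inside that stage construction, for two compounding reasons. First, every freshly adjoined witness $c$ spawns new infimum sentences $\inf_x \varphi(x,c)$ that themselves demand witnesses, so the enumeration of requirements must be dovetailed so that each is eventually served despite the perpetually growing stock of constants. Second --- and this is the essential point --- fixing the value $v(\varphi(c))$ of a witness, and more generally driving the stage data toward a genuine total value function, forces one to commit to real numbers subject only to the constraint of remaining consistently extendible; the set of admissible commitments is always nonempty (indeed ``large''), but there is no evident \emph{computable} way to select from it, and ensuring the limiting $v_{T^+}$ is itself computable is delicate. This is exactly where deterministic selection is replaced by a randomized one: sample admissible values and argue that with probability one the resulting $v_{T^+}$ is consistent, complete, and uniformly computable as a real-valued function of the sentence. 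Hence the output is only a \emph{probabilistically} decidable structure, and only a \emph{weak} structure: the metric completion of the term model --- needed for a genuine metric structure --- is not produced effectively, since passing to limits of Cauchy sequences of terms destroys the effective presentation.

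Finally I would verify the truth lemma in detail (atomic cases by construction; continuous connectives and the built-in constants, truncated subtraction, $\half$, etc., via uniform continuity plus computability of $v_{T^+}$; the $\inf_x\varphi(x)$ clause via approximate witnessing) and check that the presentation so obtained meets the precise definition of a ``probabilistically decidable continuous weak structure,'' in particular that comparisons of the computed values against rational thresholds succeed off a null set. Both of these I expect to be routine but a little fiddly, with essentially all the genuine difficulty concentrated in the effective, randomized construction of $T^+$.
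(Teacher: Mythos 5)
Your overall architecture --- Henkin constants, an effectively completed $L^+$-theory, the term model, and a Tarski-style truth lemma whose $\inf$ clause is discharged by approximate witnessing --- is the same as in Calvert's proof of this theorem (which the present paper only cites, rather than reproves), and those parts are fine. The genuine gap is in the step you yourself flag as the heart of the matter: the randomized selection. First, the consistency check for a candidate extension is \emph{not} decidable: for a decidable $T$ the quantity $(\psi\dotsub\varphi)^\circ_T$ is a computable real, and comparing a computable real against $0$ (or a rational threshold) is only semi-decidable, failing precisely at equality. That non-decidability is the entire obstruction, so it cannot be waved through at the consistency step and reintroduced only at the ``selection'' step. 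Second, the fix as you state it does not work. A probabilistic Turing machine flips coins and so cannot ``sample admissible values'' from a continuum; and even granting such samples, the theory $T^+$ you build is itself a random object whose value function encodes the random bits used, so for almost every tape it is \emph{not} computable --- the assertion that ``with probability one the resulting $v_{T^+}$ is \ldots uniformly computable'' is false (indeed, were a fixed value function computable relative to a positive-measure set of oracles, it would be outright computable and no randomness would be needed). What is actually true, and what must be verified against the definition of probabilistic decidability, is that the machine \emph{together with its random tape} approximates the predicate values to the requested precision with the requested probability. Accordingly the randomness should be located not in choosing values $v(\varphi(c))$ --- these are forced once the theory is maximally consistent, since $\varphi^{\M}=\sup\{p:\p\dotsub\varphi\in\Gamma\}$ --- but in the comparison steps of the linear completion, i.e.\ in deciding for each pair which of $\varphi\dotsub\psi$, $\psi\dotsub\varphi$ to adjoin, where the bad event (a degree of truth landing exactly on a randomly chosen dyadic cut point) has measure zero.

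It is also worth noting that Lemma \ref{lem:maxconsset} of this paper shows the randomness is dispensable: by committing to $\varphi\dotsub\psi$ only when a $2^{-(s+2)}$-approximation of the relevant degree of truth exceeds $2^{-(s+1)}$, and by observing that the undecidable borderline cases are exactly the provably equivalent pairs, one obtains a fully deterministic procedure and hence (Theorem \ref{thm:geneffcomp}) a computable presentation of a genuine complete structure --- a strict strengthening of the statement you are proving. If you intend to keep the probabilistic route, you should rewrite the key step in those terms; as written, the argument would fail.
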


In the last decade, however, computable presentations rather than probabilistic decidability have become standard for the study of effectivity on metric structures (see, \emph{e.g.}, \cite{Brown.McNicholl.Melnikov.2020} and \cite{Franklin.McNicholl.2020}). We therefore sought to answer the following question: ``Is there an effective completeness theorem for continuous logic and \emph{computable presentations}?''

Didehvar, Ghasemloo, and Pourmahdian provided a partial answer to this question in \cite{Didehvar.Ghasemloo.Pourmahdian.2010}, implicitly with respect to computable presentations. The result proven was a qualified effective completeness result for the first-order rational Pavelka logic ($\mathbf{RPL} \forall$).

\begin{theorem}[Theorem 3.5, \cite{Didehvar.Ghasemloo.Pourmahdian.2010}]
    Every consistent, linear-complete, computably axiomatizable Henkin theory in $\mathbf{RPL} \forall$ has a decidable model.
\end{theorem}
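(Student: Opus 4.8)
The plan is to carry out an effective Lindenbaum--Henkin construction: build the canonical term model of $T$ and show that computable axiomatizability together with linear-completeness force its truth values to be uniformly computable. Throughout, write $|\varphi|_T = \sup\{r \in \Q\cap[0,1] : T \vdash \bar r \to \varphi\}$ for the graded provability degree of a sentence $\varphi$; everything turns on the fact that, for the theories at hand, $\varphi \mapsto |\varphi|_T$ is computable as a real-valued function, uniformly in $\varphi$.

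First I would construct the model $\M$. Its universe is the set of closed terms of the language of $T$, identified by $t \sim s \iff |t = s|_T = 1$ if the logic carries equality (and taken outright otherwise); this universe is a computable set of codes, since $t \sim s$ is c.e.\ by enumerating proofs and its complement is c.e.\ as well, because linear-completeness lets one, whenever $|t=s|_T<1$, search out a rational $r<1$ with $T \vdash (t=s)\to\bar r$. Interpret each atomic sentence $\varphi$ by $\varphi^{\M} := |\varphi|_T$, each \L ukasiewicz connective by its piecewise-linear (hence computable) real function, and $\forall x$, $\exists x$ by $\inf$, $\sup$ over the universe. The substantive point is the \emph{truth lemma} $\|\varphi\|_{\M} = |\varphi|_T$ for all sentences $\varphi$, proved by induction on complexity: the connective cases are routine, and the quantifier cases are where the Henkin property enters, supplying for each $\exists x\,\psi(x)$ a closed witness term $c$ with $|\exists x\,\psi|_T = |\psi(c)|_T$, so that the $\sup$ over the (possibly infinite) universe is attained at a single term. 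For this to be effective I would require the witness assignment $\psi \mapsto c$ to be computable; if it is not already built in, one first passes to a computable conservative expansion of $T$ by fresh witness constants, preserving consistency, linear-completeness, computable axiomatizability, and the class of models.

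The effectiveness is then extracted as follows. Since $T$ is computably axiomatizable, $\{(r,\varphi): T \vdash \bar r \to \varphi\}$ is c.e.\ uniformly in $\varphi$, so each $|\varphi|_T$ is a uniformly left-c.e.\ real. Linear-completeness makes the Lindenbaum algebra linearly ordered, and with the rational constants present this yields $|\neg\varphi|_T = 1 - |\varphi|_T$; hence $1 - |\varphi|_T$ is also uniformly left-c.e., so $|\varphi|_T$ is uniformly computable. With the truth lemma this gives that $\varphi(\bar c) \mapsto \varphi(\bar c)^{\M} = |\varphi(\bar c)|_T$ is computable to arbitrary precision, uniformly in the formula and in the names of parameters $\bar c$ from the universe --- which is precisely the statement that $\M$ is a decidable model --- and $\M \vDash T$ because $|\varphi|_T = 1$ for every axiom $\varphi$. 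I expect the main obstacle to be the atomic step of this argument: turning the one-sided (left-c.e.) enumerability of graded provability into a genuine two-sided computation of $|\varphi|_T$ requires using linear-completeness proof-theoretically rather than only semantically --- from $|\varphi|_T < r$ one must actually produce a derivation witnessing $|\neg\varphi|_T > 1-r$, uniformly --- with a secondary difficulty in arranging that the Henkin witnessing needed for the truth lemma can be made effective and uniform.
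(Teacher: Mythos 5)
The paper does not prove this statement---it is quoted from Didehvar--Ghasemloo--Pourmahdian as background---so there is no in-paper proof to compare against; but your proposal is essentially the standard argument, and it is the same mechanism that underlies the paper's own generalized construction (the Henkin pre-structure whose predicates are interpreted by provability degrees, with two-sided computability of $\varphi\mapsto|\varphi|_T$ extracted from proof enumeration plus linearity). You correctly identify the crux: computable axiomatizability makes $|\varphi|_T$ left-c.e., and linear-completeness makes the upper cut $\{r: T\vdash \varphi\to\bar r\}$ c.e.\ as well, yielding uniform computability. One step is overstated: a Henkin theory in this graded setting supplies only \emph{approximate} witnesses (for each rational $p<|\exists x\,\psi|_T$ a constant $c$ with $|\psi(c)|_T\geq p$; compare the condition $(\sup_x\varphi\dotsub\q)\wedge(\p\dotsub\varphi[c_{\varphi,x,\p,\q}/x])\in\Gamma$ used here), not a single term at which the supremum is attained; the truth lemma only needs $\sup_t|\psi(t)|_T=|\exists x\,\psi|_T$, so your argument survives, but the exact-attainment claim should be dropped. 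Also, your aside about first Henkinizing $T$ while preserving linear-completeness is not needed (the hypotheses already assume a Henkin, linear-complete theory) and is in fact the nontrivial step these hypotheses are designed to sidestep---effectively producing such an extension from an arbitrary theory is precisely what Lemma \ref{lem:maxconsset} of the present paper accomplishes.
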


Notably, the continuous logic of \cite{BenYaacov.Berenstein.Henson.Usvyatsov.2008} is a fragment of $\mathbf{RPL} \forall$, so the above theorem applies to continuous logic, as well. 

Our primary result can be considered as a generalization and strengthening of the above theorem: there is an effective procedure which, given a name of a continuous theory, produces a presentation of a metric structure which models that theory. It follows that the presentation is Turing reducible to the given name. Hence if the theory is decidable, the presentation is computable.

The result has four major upshots in comparison to the previous effective completeness results.
\begin{enumerate}[(i)]
    \item The given (name of a) continuous theory does not need to be either \emph{complete} or \emph{linear-complete}. Through the process described in Lemma \ref{lem:maxconsset}, any continuous theory may be effectively extended to a complete theory (though, as a \emph{caveat}, this extension is not unique).
    \item The given (name of a) continuous theory need not be \emph{decidable}. The generalized effective completeness theorem applies to any name of a continuous theory, and produces a presentation which is Turing reducible to that name.
    \item The generalized effective completeness theorem relates continuous theories to \emph{presentations} of metric structures, which have become the default for the study of effectivity on metric structures, rather than \emph{probabilistic decidability}.
    \item The generalized effective completeness theorem relates continuous theories to presentations of genuine metric \emph{structures}, rather than \emph{weak structures}, so the assumption of a countable universe may be dropped.
\end{enumerate}

This paper is organized as follows. In Section \ref{sec:back}, we introduce continuous logic, metric structures, and computable presentations. Section \ref{sec:completeres} recalls previous results in the model theory of metric structures. These results are then extended to important preliminary model-theoretic propositions in Section \ref{sub:mod}. Section \ref{sub:dec} follows to include our primary lemma, allowing us to uniformly effectively extend theories to complete theories. We then prove our main theorem, a generalized version of effective completeness, in Section \ref{sub:eff}. Standard effective completeness then follows from this as a corollary.
    

\section{Background}\label{sec:back}

\subsection{Continuous logic}

The \defn{logical symbols} of continuous logic consist of the following.
    \begin{itemize}
        \item $($ and $)$ are the \defn{parentheses}.
        \item $x,y,z,...$ are the \defn{variable symbols} ($\var$).
        \item $\neg$, $\half$, and $\dotsub$ are the \defn{connectives}.
        \item $\sup$ and $\inf$ are the \defn{quantifiers}.
    \end{itemize}

\begin{remark}\label{rem:morecon}
    In some versions of continuous logic, the set of connectives contains a distinguished symbol $\u$ for each continuous map $u:[0,1]^{\eta(u)}\to[0,1]$. The resulting set of well-formed formulas for such a logic is, however, uncountable and thus fails to perform effectively. Our choice of $\neg$, $\half$, and $\dotsub$ as the connectives was made for four reasons.
    \begin{enumerate}
        \item \label{it:int} $\neg$ plays precisely the role of classical negation ($\neg$) and $\dotsub$ of reverse implication ($\leftarrow$). The interpretation of the $\half$ operator is similarly intuitive, as will be shown in the following subsection.
        \item \label{it:full} In \cite{BenYaacov.Usvyatsov.2010}, it was shown that after interpretation, $\neg$, $\half$, and $\dotsub$ are dense in the set of all continuous maps on $[0,1]$. Thus finitary well-formed formulas in these connectives can approximate those in the wider set of connectives arbitrarily well. Such an approximation is, moreover, sufficient for completeness (as seen in \cite{BenYaacov.Pedersen.2010}).
        \item \label{it:eff} When a signature is effectively numbered, the sentences and well-formed formulas of that signature may be effectively enumerated.
        \item Because of \ref{it:int}, \ref{it:full}, and \ref{it:eff}, $\neg$, $\half$, and $\dotsub$ have become a somewhat canonical set of connectives for continuous logic.
    \end{enumerate}
\end{remark}

A \defn{signature} is a quintuple $L=\big(\pred,\fun,\con,\Delta,\eta\big)$
    such that each of the following hold.
    \begin{itemize}
        \item $\pred$, $\fun$, and $\con$ are mutually disjoint, and contain no logical symbols.
        \item $\Delta:\pred\cup\fun\to \N^\N$.
        \item $\eta:\pred\cup\fun\to\Nminus$.
        \item There is some $\d\in \pred$ such that $\Delta(\d)=\id_\N$ and $\eta(\d)=2$.
    \end{itemize}
    $\pred$ is the set of \defn{predicate symbols}, $\fun$ the set of \defn{function symbols}, and $\con$ the set of \defn{constant symbols}. $\Delta$ is the \defn{modulus map} and $\eta$ the \defn{arity map}. Each predicate (or function) symbol $F$ is an \emph{$\eta(F)$-ary} predicate (or function) symbol. 

For the remainder of this chapter, unless stated otherwise, we will assume we have a fixed signature $L$. The construction of \defn{terms} and \defn{well-formed formulas} (wffs) is straightforward. As are the definitions of \defn{free variables} and \defn{sentences}. Explicit definitions can be found in \cite{BenYaacov.Berenstein.Henson.Usvyatsov.2008}. A \defn{theory} is a set of sentences. Note, now, that the following syntax maps will be used as shorthand.
    \begin{center}
    \begin{tabular}{ |c|c| } 
     \hline
     \textbf{Shorthand} & \textbf{String} \\ 
     \hline
     $\varphi\vee\psi$ & $\neg\big((\neg\varphi)\dotsub \psi\big)$\\
     $\varphi\wedge\psi$ & $\varphi\dotsub(\varphi\dotsub \psi)$\\
     $\varphi\bic\psi$ & $(\varphi\dotsub\psi)\vee(\psi\dotsub\varphi)$\\
     $\vec{x}$ & $(x_0,...,x_{n})$ \\
     $\sup_{x_0,...,x_n}\varphi$ & $\sup_{x_0}\dots\sup_{x_n}\varphi$ \\
     $\inf_{x_0,...,x_n}\varphi$ & $\inf_{x_0}\dots\inf_{x_n}\varphi$ \\
     $\underline{0}$ & $\sup_x\d(x,x)$\\
     $\underline{1}$ & $\neg \ \underline{0}$\\
     $\varphi\dotplus\psi$ & $\neg\big((\underline{1}\dotsub \varphi)\dotsub \psi)\big)$\\
     $m\varphi$ & $\underbrace{\big(...(\varphi\dotplus \varphi)\dotplus\dots\dotplus \varphi\big)}_\text{$m$-many}$\\
     $\underline{2^{-k}}$ & $\underbrace{\half \dots \half}_\text{ $k$-many} \  \underline{1}$\\
     $\underline{\frac{\ell}{2^k}}$ & $\underbrace{\big(...(\underline{2^{-k}}\dotplus \underline{2^{-k}})\dotplus\dots\dotplus \underline{2^{-k}}\big)}_\text{$\ell$-many}$\\
     \hline
    \end{tabular}
    \end{center}

Some of the above syntax may seem loaded; this is for good reason, as will be noted in \ref{sec:metstruct}.

We present a simplified list of \defn{axiom schemata} for continuous logic, which are more parsimonious for effective constructions. In each of the following, $\varphi$, $\psi$, and $\theta$ range over arbitrary wffs. The first four schemata correspond to the classical propositional axioms.
\begin{enumerate}[I.]
    \item \label{ax:one} $(\varphi\dotsub \psi)\dotsub \varphi.$
    \item $\big((\theta\dotsub \varphi)\dotsub (\theta\dotsub \psi)\big)\dotsub (\psi\dotsub \varphi).$
    \item $\big(\varphi\dotsub (\varphi\dotsub \psi)\big)\dotsub\big(\psi\dotsub(\psi\dotsub\varphi)\big)$
    \item $(\varphi\dotsub \psi)\dotsub (\neg\varphi\dotsub \neg\varphi).$
\end{enumerate}

\noindent The next four correspond to the classical first-order axiom schemata. For every $x\in\var$ and term $t$,

\begin{enumerate}[I.]\addtocounter{enumi}{4}
    \item $(\sup_x\psi\dotsub\sup_x\varphi)\dotsub\sup_x(\psi\dotsub\varphi).$
    \item $\varphi[t/x]\dotsub \sup_x\varphi$, when this substitution is correct.
    \item $\sup_x\varphi\dotsub \varphi$, when $x$ is not free in $\varphi$.
    \item $\inf_x\varphi \bic \neg(\sup_x\neg\varphi)$.
\end{enumerate}

\noindent These two define the $\frac{1}{2}$ connective.

\begin{enumerate}[I.]\addtocounter{enumi}{8}
    \item $\frac{1}{2}\varphi\dotsub (\varphi\dotsub \frac{1}{2}\varphi).$
    \item \label{ax:ten} $(\varphi\dotsub \frac{1}{2}\varphi)\dotsub\frac{1}{2}\varphi.$
\end{enumerate}

\noindent The following define the predicate symbol $\d$. For every $x,y,z\in\var$,

\begin{enumerate}[I.]\addtocounter{enumi}{10}
    \item \label{ax:ref} $\d(x,x).$
    \item \label{ax:sym} $\d(x,y)\dotsub \d(y,x).$
    \item \label{ax:tri} $\big(\d(x,z)\dotsub\d(x,y)\big)\dotsub \d(y,z).$
\end{enumerate}

\noindent The next schema defines interaction between $\d$ and function symbols. For every $f\in\fun$, $n\in\mathbb{N}$, (possibly empty) tuples of terms $\vec{t_0},\vec{t_1}$, and $x,y\in \var$,

\begin{enumerate}[I.]\addtocounter{enumi}{13}
    \item \label{ax:modfun} $\big(\underline{2^{-\Delta(f;n)}}\dotsub \underline{d}(x,y)\big)\wedge \big(\underline{d}\big(f(\vec{t_0},x,\vec{t_1}),f(\vec{t_0},y,\vec{t_1})\big)\dotsub \underline{2^{-n}}\big).$
\end{enumerate}

\noindent Lastly, the following defines interaction between $\d$ and other predicate symbols. For every $P\in\pred$, $n\in\mathbb{N}$, (possibly empty) tuples of terms $\vec{t_0},\vec{t_1}$, and $x,y\in \var$,

\begin{enumerate}[I.]\addtocounter{enumi}{14}
    \item \label{ax:modpred} $\big(\underline{2^{-\Delta(P;n)}}\dotsub \underline{d}(x,y)\big)\wedge \big(\big(P(\vec{t_0},x,\vec{t_1})\dotsub P(\vec{t_0},y,\vec{t_1})\big)\dotsub \underline{2^{-n}}\big).$
\end{enumerate}

The \defn{rules of inference} of continuous logic are as follows, where $\varphi$ and $\psi$ are wffs and $x$ is a variable symbol.
\begin{itemize}
    \item \emph{Modus ponens}
    \[\frac{\varphi, \ \psi\dotsub\varphi}{\psi}\]
    \item \emph{Generalization}
    \[\frac{\varphi}{\sup_x\varphi}\]
\end{itemize}

The set of \defn{provable} wffs, \defn{consistency}, \defn{inconsistency}, and \defn{consequences} are each defined as in the classical setting. Again, we direct the reader toward \cite{BenYaacov.Berenstein.Henson.Usvyatsov.2008} for explicit definitions.


\subsection{Metric structures}\label{sec:metstruct}

Nearly every structure in mathematical analysis extends either a psuedometric or metric space. Continuous logic was developed with the purpose of describing such structures. On this note, a signature must be able to speak about the continuity of maps on such structures.

\begin{definition}
    Let $(|\M|,d)$ and $(|\M'|,d')$ be pseudometric spaces of diameter $1$ and let $f:|\M|\to|\M'|$. A map $\Delta(f):\N\to\N$ is called a \defn{modulus of continuity} for $f$ if for every $a,b\in |\M|$, $d(a,b)<2^{-\Delta(f;n)}$ implies that $d'\big(f(a),f(b)\big)\leq 2^{-n}$.
\end{definition}

An \defn{interpretation} of $L$ is a map $\cdot^\M$ with domain $\pred\cup\fun\cup\con$ such that for some \defn{universe} $|\M|$, each of the following hold.
    \begin{itemize}
        \item For every predicate symbol $P$, $P^\M:|\M|^{\eta(P)}\to [0,1]$.
        \item For every function symbol $f$, $f^\M:|\M|^{\eta(f)}\to |\M|$.
        \item For every constant symbol $c$, $c^\M\in |\M|$.
    \end{itemize}
$\cdot^\M$ is a \defn{continuous interpretation} if, moreover, each of the following hold.
    \begin{itemize}
        \item $\d^\M:=d$ is a pseudometric.
        \item For every predicate symbol $P$, $\Delta(P)$ is a modulus of continuity for $P$.\footnote{Here the domain of $P^\M$ is considered as the pseudometric space $\big(|\M|^{\eta(P)},(\d^\M)^{\eta(P)}\big)$ and the range the metric space $\big([0,1],|\cdot|\big)$.}
        \item For every function symbol $f$, $\Delta(f)$ is a modulus of continuity for $f$.
    \end{itemize}
When $\cdot^\M$ is an interpretation, the quintuple
    \[\M=\big(|\M|,d,\big\{P^\M:P\in\pred\setminus\{\d\}\big\},\big\{f^\M:f\in\fun\big\},\big\{c^\M:c\in\con\big\}\big)\]
is an \defn{$L$-pre-structure}. Moreover, if $\cdot^\M$ is a continuous interpretation, $\M$ is a \defn{continuous} $L$-pre-structure. Lastly, if $\cdot^\M$ is a continuous interpretation and $\big(|\M|,d\big)$ is a complete metric space, then $\M$ is an \emph{$L$-structure}. If $|\M|$ is countable, $\M$ is called \defn{weak}.
    
When $\M$ is an $L$-pre-structure, $\pred^\M$ is the set of \defn{predicates} of $\M$, $\fun^\M$ the set of \defn{functions} of $\M$, and $\con^\M$ the set of \defn{distinguished points} of $\M$.

At times, the language of \emph{non}-continuous pre-structures is dropped, and every pre-structure is assumed to be continuous. Also what were given here as ``$L$-structures'' are often designated as ``metric $L$-structures''. In this manuscript, however, we will assume every structure is interpreting a continuous signature, so we drop the prefix ``metric''.

\begin{example}\label{ex:structures}
    The following are examples of structures (for some related signature).
    \begin{itemize}
        \item A complete metric space of diameter $1$ with no additional structure.
        \item The natural numbers with the discrete metric and addition and multiplication as binary functions.
        \item The unit ball of a Banach space over $\R$ or $\C$, the norm as the metric, as functions all binary maps of the form 
        \[f_{\alpha,\beta}(x,y)=\alpha x+\beta y\]
        where $|\alpha|+|\beta|\leq 1$ are scalars, and the additive identity $0$ is a distinguished point.
        \item The unit ball of a $C^*$-algebra with the standard norm as the metric, and multiplication and the $*$-map included as functions.
        \item When $(\Omega, \mathcal{B}, \mu)$ is a probability space, let $M$ be its measure algebra and $d$ the measure of symmetric difference. Then $(M,d)$ along with $\mu$ as a predicate, $\cap$, $\cup$, and $\cdot^c$ as functions, and $0$ and $1$ as distinguished points is a structure.
    \end{itemize}
\end{example}

When $\M$ is a structure, an \defn{assignment} (on $\M$) is a map $\sigma:\var\to|\M|$. When $\sigma$ is an assignment, $x\in\var$, and $a\in|\M|$, the assignment $\sigma(x\mapsto a)$ is defined as follows.
    \[\sigma(x\mapsto a; \ y):=\begin{cases}
    a & \text{ if }y=x,\\
    \sigma(y) & \text{ otherwise.}
    \end{cases}\]
    Given a term $t$, the \defn{interpretation} of $t$ in $\M$ with $\sigma$ ($t^{\M,\sigma}$) is defined recursively as follows.
    \begin{itemize}
        \item If $t\in \con$, then $t^{\M,\sigma}:=t^\M$.
        \item If $t\in \var$, then $t^{\M,\sigma}:=\sigma(t)$.
        \item If $t=f(t_0,...,t_n)$, then $t^{\M,\sigma}:=f^\M\big(t_0^{\M,\sigma},...,t_n^{\M,\sigma}\big)$.
    \end{itemize}

For every $L$-pre-structure $\M$, assignment $\sigma$, and wff $\varphi$, the \defn{value} (or \defn{truth value}) of $\varphi$ in $\M$ with $\sigma$ ($\varphi^{\M,\sigma}$) is defined recursively as follows.
    \begin{itemize}
        \item $\big(P(t_0,...,t_n)\big)^{\M,\sigma}:=P^{\M}\big(t_0^{\M,\sigma},...,t_n^{\M,\sigma}\big)$.
        \item $(\neg\varphi)^{\M,\sigma}:=1-\varphi^{\M,\sigma}$.
        \item $\big(\half\varphi\big)^{\M,\sigma}:=\frac{1}{2}\cdot\varphi^{\M,\sigma}$.
        \item $(\varphi\dotsub\psi)^{\M,\sigma}:=\max\big\{\varphi^{\M,\sigma}-\psi^{\M,\sigma},0\big\}$.
        \item $\big(\sup_x\varphi\big)^{\M,\sigma}:=\sup_{a\in|\M|}\varphi^{\M,\sigma(x\mapsto a)}$.
        \item $\big(\inf_x\varphi\big)^{\M,\sigma}:=\inf_{a\in|\M|}\varphi^{\M,\sigma(x\mapsto a)}$.
    \end{itemize}
    When $\varphi^{\M,\sigma}=0$, $\M$ with $\sigma$ \defn{satisfies} $\varphi$ ($\M,\sigma\vDash\varphi$).  Also if $\varphi$ is a wff with free variables $\vec{x}$ and $\M$ an $L$-structure, $\varphi^{\M}(\vec{a})$ means $\varphi^{\M,\sigma(\vec{x}\mapsto\vec{a})}$ for any assignment $\sigma$.

\begin{remark}
    Instead of beginning with a signature and discussing the structures over that signature, one may begin with a structure and define the signature \emph{of} that structure. This is done as in the classical case, though normally the set of constants is assumed to be at most countable. Thus the interpretations of the constants are \emph{not} the entire universe.
\end{remark}

The following interpretations of shorthand sentences can be easily verified by direct computation.
    \begin{enumerate}[(a)]
        \item $(\varphi\vee\psi)^\M=\max\big\{\varphi^\M,\psi^\M\big\}$.
        \item $(\varphi\wedge\psi)^\M=\min\big\{\varphi^\M,\psi^\M\big\}$.
        \item $(\varphi\bic \psi)^\M=\big|\varphi^\M - \psi^\M\big|$.
        \item \label{subex:zero} $\underline{0}^\M=0$.
        \item \label{subex:one} $\underline{1}^\M=1$.
        \item $\varphi\dotplus\psi=\min\big\{\varphi^\M+\psi^\M,1\big\}$.
        \item $m\varphi=\min\big\{m\cdot \varphi^\M,1\big\}$.
        \item \label{subex:dyad} $\big(\underline{\frac{\ell}{2^k}}\big)^\M=\frac{\ell}{2^k}$.
    \end{enumerate}
Note that all of the above also hold for wffs when interpreted along with an assignment.

It is also important to note when one structure embeds into another.

\begin{definition}
    Let $\M$ and $\Nstruct$ be continuous $L$-pre-structures. $\iota:|\M|\to|\Nstruct|$ is an \defn{$L$-morphism} if each of the following hold.
    \begin{itemize}
        \item For every $f\in\fun$ and $a_0,...,a_{\eta(f)-1}\in |\M|$,
        \[\iota \big(f^\M(a_0,...,a_{\eta(f)-1})\big)=f^\Nstruct\big(\iota (a_0),...,\iota (a_{\eta{f}-1})\big).\]
        \item For every $P\in\pred$ and $a_0,...,a_{\eta(P)-1}\in |\M|$,
        \[\iota \big(P^\M(a_0,...,a_{\eta(P)-1})\big)=P^\Nstruct\big(\iota (a_0),...,\iota (a_{\eta{P}-1})\big).\]
    \end{itemize}
    $\iota $ is an \defn{elementary} $L$-morphism if, moreover, for every $\M$-assignment $\sigma$, $\varphi^{\M,\sigma}=\varphi^{\Nstruct,\iota \circ\sigma}$.
\end{definition}

\subsection{Computable analysis and presentations}

Computable analysis is summarized well in \cite{Weihrauch.2000}. For our purposes, however, we need only mention the definition of a computable real number.

\begin{definition}
    A real number $r$ is \defn{computable} if there is an effective procedure which, given $k\in\N$, outputs a rational $q\in\Q$ such that
    \[|r-q|<2^{-k}.\]
    When $A$ is a countable set, a map $f:A\to\R$ is then \defn{computable} if there is an effective procedure which, given $a\in A$ and $k\in\N$, outputs a rational $q\in \Q$ such that
    \[|f(r)-q|<2^{-k}.\]
\end{definition}

Since metric structures often have uncountable domains, so a method for discussing effectivity on such structures was introduced by \cite{Melnikov.2013}, recently also seen in \cite{Franklin.McNicholl.2020}. An effectively numbered signature is necessary for this method.

\begin{definition}\label{def:effnum}
    A signature $L$ is \defn{effectively numbered} if there is an effective mapping of the natural numbers onto $\pred\cup\fun\cup\con$ and, moreover, an effective procedure which, given the code of a predicate or function symbol, outputs that symbol's arity and an index of a Turing machine which serves as a modulus of continuity for that symbol.
\end{definition}

We now introduce computable presentations. From here we will assume we are working under a fixed effectively numbered signature $L$.

\begin{definition}
    Given an $L$-structure $\M$ and $A\subseteq |\M|$, the \defn{algebra generated by} $A$ is the smallest subset of $|\M|$ containing $A$ that is closed under every function of $\M$.
    
    A pair $(\M,g)$ is called a \defn{presentation} of $\M$ if $g:\mathbb{N}\to |\M|$ is a map such that the algebra generated by $\ran(g)$ is dense. A presentation of $\M$ is denoted $\M^\sharp$. Every point in $\ran(g)$ is called a \defn{distinguished point} of the presentation, and each point in the algebra generated by the distinguished points is called a  \defn{rational point} of the presentation ($\Q(\M^\sharp)$).
\end{definition}

Notably, $\ran(g)$ need not be dense, but the \emph{algebra} it generates does.

\begin{definition}
    A presentation $\M^\sharp$ is \defn{computable} if the predicates of $\M$ are uniformly computable on the rational points of $\M^\sharp$.\footnote{Since the metric is a binary predicate on $\M$, this entails that the distance between any two rational points is uniformly computable.}
    
    An \defn{index} of a computable presentation $\M^\sharp$ is an index of a Turing machine which, given a code of $P\in\pred$, codes of $a_0,...,a_{\eta(P)-1}\in \Q(\M^\sharp)$, and $k\in\N$, outputs a code of a rational $q$ such that
    \[\big|P^\M(a_0,...,a_{\eta(P)-1})-q\big|<2^{-k}.\]
\end{definition}

\begin{example}
    Let $\X$ be the metric structure consisting of the unit ball of a finite-dimensional Banach space over $\R$, its norm as the metric, as functions all binary maps of the form
    \[f_{p,q}(x,y)=px+qy\]
    where $p,q\in \Q$ and $|p|+|q|\leq 1$, and $0$ as the only distinguished point. Let $N$ be the dimension of $|\X|$. Then define $g:\N\to |\X|$ as $g(n)=e_n$ for every $n\leq N-1$ and $g(n)=e_{N-1}$ for every $n\geq N$. Clearly the algebra generated by $\ran(g)$ is dense in $|\X|$. Moreover, with a bit of careful calculation, one may see that this presentation is computable.
\end{example}

\section{Previous completeness results}\label{sec:completeres}

We now recall many results related to completeness which were proven in \cite{BenYaacov.Pedersen.2010}. We have altered some of the notation in order to make these results more applicable to our work, but the results proven remain the same. Important to this work is the introduction of the formal notion of dyadic numerals.

\begin{definition}
    The \defn{dyadic numerals} ($\dyad$) are all sentences of the form $\underline{\frac{\ell}{2^k}}$ for $\ell,k\in\N$. When $\p\in\dyad$, by $p$ we mean the real number such that for every $L$-pre-structure $\M$, $\p^\M=p$.
\end{definition}

Maximal consistency is defined similarly to the classical case, but with an extra condition concerning limiting behavior.

\begin{definition}\label{def:maxcon}
    A set of wffs $\Gamma$ is \defn{maximally consistent} if for every pair of wffs $\varphi$ and $\psi$, the following hold.
    \begin{enumerate}[(i)]
        \item \label{lem:maxconsp1} If $\Gamma\vdash \varphi\dotsub \underline{2^{-k}}$ for every $k\in\N$, then $\varphi\in \Gamma$.
        \item \label{lem:maxconsp2} Either $\varphi\dotsub \psi\in \Gamma$ or $\psi\dotsub\varphi\in \Gamma$.
    \end{enumerate}
\end{definition}

Notably, without condition (\ref{lem:maxconsp1}), we would not gain the intuitive property that if $\Gamma$ is maximally consistent, then for every $\varphi\notin \Gamma$, $\Gamma\cup\{\varphi\}$ is inconsistent. Ben Yaacov and Pedersen implement a continuous version of a Henkin construction to prove their completeness theorem. To accomplish this, Henkin witnesses must be added to the signature.

\begin{definition}
    Given a signature $L$, the \emph{Henkin extended signature of $L$} ($L^+$) is the smallest signature that extends $L$ and that, for every combination of $L^+$-wff $\varphi$, variable symbol $x$, and $\p,\q\in \dyad$, contains a unique constant symbol $c_{\varphi,x,\p,\q}$.
    
    When $\Gamma$ is a set of $L^+$-wffs, we say it is \emph{Henkin complete} if for every $L^+$-wff $\varphi$, every variable symbol $x$, and every $\p,\q\in \dyad$,
    \[\big(\sup_x\varphi\dotsub \q\big)\wedge\big(\p\dotsub\varphi[c_{\varphi,x,\p,\q}/x]\big)\in \Gamma.\]
\end{definition}

We now note a relevant lemma and theorem from Ben Yaacov and Pedersen.

\begin{lemma}[(ii) of Lemma 8.5, \cite{BenYaacov.Pedersen.2010}]\label{lem:thisorthat}
    Let $T$ be an $L$-theory. Then for every pair of $L$-wffs $\varphi$ and $\psi$, either $T\cup\{\varphi\dotsub \psi\}$ or $T\cup\{\psi\dotsub\varphi\}$ is consistent.
\end{lemma}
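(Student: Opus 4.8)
The plan is to argue contrapositively and push the work onto the deduction theorem for continuous logic. (Here I assume, as is surely intended, that $T$ is consistent; if $T$ were inconsistent then both displayed extensions would be inconsistent and the statement would fail.) Suppose both $T\cup\{\varphi\dotsub\psi\}$ and $T\cup\{\psi\dotsub\varphi\}$ are inconsistent, i.e.\ each proves $\underline{1}$. First I would invoke the continuous-logic deduction theorem in the form: if $T\cup\{\chi\}\vdash\theta$ then $T\vdash\theta\dotsub n\chi$ for some $n\in\N$; the multiplicity $n$ is genuinely needed here because $\dotsub$ is \L ukasiewicz (reverse) implication rather than classical implication. Applied with $\theta=\underline{1}$ to each extension, this produces $n,m\in\N$ with $T\vdash\underline{1}\dotsub n(\varphi\dotsub\psi)$ and $T\vdash\underline{1}\dotsub m(\psi\dotsub\varphi)$.

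Next I would close the argument using two soft propositional facts. The first is that if $T\vdash\alpha$ and $T\vdash\beta$ then $T\vdash\alpha\vee\beta$ — a short chain of modus ponens steps off value-$0$ propositional theorems such as $\big((\alpha\dotplus\beta)\dotsub\beta\big)\dotsub\alpha$ and $(\alpha\vee\beta)\dotsub(\alpha\dotplus\beta)$ — which yields $T\vdash\big(\underline{1}\dotsub n(\varphi\dotsub\psi)\big)\vee\big(\underline{1}\dotsub m(\psi\dotsub\varphi)\big)$. The second is that this disjunction has value $1$ in every continuous $L$-pre-structure under every assignment: for any $\M$ and $\sigma$, at least one of $(\varphi\dotsub\psi)^{\M,\sigma}$ and $(\psi\dotsub\varphi)^{\M,\sigma}$ equals $0$, so the matching disjunct evaluates to $1-0=1$, and $\vee$ is interpreted as $\max$. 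Hence the sentence $\underline{1}\dotsub\big[\big(\underline{1}\dotsub n(\varphi\dotsub\psi)\big)\vee\big(\underline{1}\dotsub m(\psi\dotsub\varphi)\big)\big]$ has value $0$ everywhere, so it is provable (by completeness of the propositional fragment of continuous logic, or by an explicit Hilbert-style derivation from the schemata I--X), and one more modus ponens gives $T\vdash\underline{1}$ — contradicting the consistency of $T$.

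I expect the main obstacle to be the deduction theorem itself, since outside the classical setting it must carry the parameter $n$. If one does not wish simply to cite \cite{BenYaacov.Pedersen.2010} for it, the direction used above — inconsistency of $T\cup\{\chi\}$ implies $T\vdash\underline{1}\dotsub n\chi$ for some $n$ — can be obtained by induction on the length of a derivation of $\underline{1}$ from $T\cup\{\chi\}$: each line $\theta_i$ is assigned a witness $n_i$ with $T\vdash\theta_i\dotsub n_i\chi$, and one checks that axioms, modus ponens, and generalization each preserve the existence of such a witness (generalization being harmless once $\varphi,\psi$, and hence $\chi$, are read as the sentences they are implicitly taken to be). Everything else then reduces to routine computations with the value semantics recorded in Section~\ref{sec:metstruct}.
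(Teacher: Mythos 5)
Your argument is correct, and you should know that the paper does not actually prove this statement: it is imported verbatim as Lemma 8.5(ii) of \cite{BenYaacov.Pedersen.2010}, so the only fair comparison is with the proof there. That proof runs along essentially the same lines as yours --- contrapositive, the deduction theorem to extract $T\vdash\underline{1}\dotsub n(\varphi\dotsub\psi)$ and $T\vdash\underline{1}\dotsub m(\psi\dotsub\varphi)$, and then a propositional tautology recording that $\varphi\dotsub\psi$ and $\psi\dotsub\varphi$ cannot both be bounded away from $0$. Ben Yaacov and Pedersen package the last step through part (i) of their Lemma 8.5 (inconsistency of both $T\cup\{\chi_1\}$ and $T\cup\{\chi_2\}$ forces inconsistency of $T\cup\{\chi_1\wedge\chi_2\}$) together with the theoremhood of $(\varphi\dotsub\psi)\wedge(\psi\dotsub\varphi)$, which always evaluates to $0$; your disjunctive formulation with $\underline{1}\dotsub\big[\big(\underline{1}\dotsub n(\varphi\dotsub\psi)\big)\vee\big(\underline{1}\dotsub m(\psi\dotsub\varphi)\big)\big]$ is just the dual packaging of the same fact. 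Your two caveats are also the right ones: the lemma does tacitly assume $T$ consistent, and the appeal to propositional completeness is not circular, since completeness of the propositional fragment is established in \cite{BenYaacov.Pedersen.2010} before the first-order Henkin construction in which this lemma is used.

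One small point worth making explicit if you write this up: when you invoke propositional completeness for the final tautology, the atoms must be $\varphi$ and $\psi$ themselves, not $A=\varphi\dotsub\psi$ and $B=\psi\dotsub\varphi$ treated as independent letters --- for independent $A,B$ the formula $\underline{1}\dotsub\big[(\underline{1}\dotsub nA)\vee(\underline{1}\dotsub mB)\big]$ is not identically $0$ (take $A=B=\tfrac12$ and $n,m\geq 2$). Your semantic verification does evaluate $(\varphi\dotsub\psi)^{\M,\sigma}$ and $(\psi\dotsub\varphi)^{\M,\sigma}$ jointly, so the claim you actually need is a genuine tautology in the atoms $\varphi,\psi$; just be sure the completeness theorem is applied at that level.
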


\begin{theorem}[From Theorem 8.10 and Proposition 9.2, \cite{BenYaacov.Pedersen.2010}]\label{thm:gammaext}
    Let $T$ be an $L$-theory. Then there exists a maximally consistent, Henkin complete set of $L^+$-wffs $\Gamma$ which extends $T$.
\end{theorem}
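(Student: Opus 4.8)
\emph{Approach.} I would establish this by a continuous-logic version of the Lindenbaum--Henkin construction, in two phases---first adjoining Henkin witnesses over the extended signature $L^+$ to obtain a consistent, Henkin complete extension, then closing off to a maximally consistent set---which is in essence a repackaging of Theorem~8.10 and Proposition~9.2 of~\cite{BenYaacov.Pedersen.2010}. Lemma~\ref{lem:thisorthat} supplies the dichotomy driving the second phase, and throughout I would exploit that the proof relation is finitary, so that consistency of an increasing union follows from consistency at each stage.

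\emph{Phase 1 (Henkin witnesses).} Work in $L^+$ and enumerate---by transfinite recursion if $L$ is uncountable---all tuples $(\varphi,x,\p,\q)$ with $\varphi$ an $L^+$-wff, arranging the list so that each witness constant $c_{\varphi,x,\p,\q}$ occurs neither in $T$ nor in any Henkin axiom listed earlier (possible since the witness constants can be stratified by the complexity of the formula they witness). Put $T_0=T$ and let $T_{i+1}$ be $T_i$ together with the axiom $(\sup_x\varphi\dotsub\q)\wedge(\p\dotsub\varphi[c_{\varphi,x,\p,\q}/x])$, taking unions at limits. Each step preserves consistency: were $T_{i+1}$ inconsistent, then, substituting a fresh variable $y$ for the fresh constant $c_{\varphi,x,\p,\q}$ throughout a finite witnessing derivation and applying the Generalization rule together with axiom schemata~V and~VI, one would derive the same inconsistency from $T_i$ alone. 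The union $T^+$ is then a consistent, Henkin complete $L^+$-theory extending $T$.

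\emph{Phase 2 (maximalization).} Enumerate all pairs $(\varphi,\psi)$ of $L^+$-wffs and, starting from $T^+$, build an increasing chain by adjoining at stage $(\varphi,\psi)$ whichever of $\varphi\dotsub\psi$, $\psi\dotsub\varphi$ keeps the current set consistent; such a choice exists by Lemma~\ref{lem:thisorthat}. The union $\Gamma_0$ is consistent, contains $T^+$ (hence stays Henkin complete), and satisfies condition~(\ref{lem:maxconsp2}). To secure condition~(\ref{lem:maxconsp1}) I would iterate the operation sending a set $\Gamma'$ to its deductive closure together with $\{\varphi:\Gamma'\vdash\varphi\dotsub\underline{2^{-k}}\text{ for all }k\}$, transfinitely until it stabilizes---stabilization occurring because the ambient set of $L^+$-wffs has a fixed cardinality, and consistency being preserved because a finite inconsistency proof from $\Gamma'$ and finitely many such $\varphi$'s would, by induction on their number and the propositional schemata governing $\dotsub$ and the dyadic numerals, force $\Gamma'\vdash\underline{2^{-k}}\dotsub\varphi$ for some $k$, contradicting $\Gamma'\vdash\varphi\dotsub\underline{2^{-(k+1)}}$. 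At a fixed point $\Gamma$ one has deductive closure, hence condition~(\ref{lem:maxconsp1}), while condition~(\ref{lem:maxconsp2}) and Henkin completeness persist; this $\Gamma$ is as required.

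\emph{Main obstacle.} The genuinely continuous content, and the step I expect to need the most care, is the interplay in Phase~2 between the limiting clause~(\ref{lem:maxconsp1}) and consistency: one must verify simultaneously that closing under ``provable below every dyadic threshold'' cannot create an inconsistency \emph{and} that the resulting set is stable---that is, that the final $\Gamma$ really absorbs every $\varphi$ with $\Gamma\vdash\varphi\dotsub\underline{2^{-k}}$ for all $k$, not merely those exhibited at the level of $\Gamma_0$. Getting the precise proof-theoretic manipulations of the Henkin axioms and of the dyadic-numeral arithmetic right is where I would lean most directly on~\cite{BenYaacov.Pedersen.2010}; the fresh-constant argument of Phase~1 and the Lindenbaum bookkeeping of Phase~2 are otherwise routine transcriptions of the classical case.
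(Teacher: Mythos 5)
The paper offers no proof of this statement---it is imported verbatim from Ben Yaacov and Pedersen (their Theorem 8.10 and Proposition 9.2)---so the only meaningful comparison is against their argument, and your two-phase Lindenbaum--Henkin sketch does reproduce its structure faithfully: witness constants adjoined conservatively via the fresh-variable/Generalization argument, the linear-order condition (\ref{lem:maxconsp2}) secured by Lemma \ref{lem:thisorthat}, and the limit clause (\ref{lem:maxconsp1}) handled by iterating a closure operator to a fixed point, which is indeed the point that has no classical analogue. One bookkeeping remark: the consistency of the limit closure is most cleanly routed through the Deduction Theorem---if $\Gamma'\cup\{\varphi_1,\dots,\varphi_n\}$ is inconsistent then $\Gamma'$ proves (a formula provably equivalent to) $\underline{1}\dotsub(m_1\varphi_1\dotplus\dots\dotplus m_n\varphi_n)$, and since each $m_i\varphi_i$ is provably below every $\underline{2^{-k}}$, so is the truncated sum, forcing $\Gamma'\vdash\underline{1}\dotsub\underline{2^{-k}}$ and hence the inconsistency of $\Gamma'$ itself. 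The shape you wrote ($\Gamma'\vdash\underline{2^{-k}}\dotsub\varphi$ versus $\Gamma'\vdash\varphi\dotsub\underline{2^{-(k+1)}}$) is the right idea, but the multipliers $m_i$ cannot be elided.

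The one substantive gap is in Phase 1: as you state it, consistency cannot be preserved for \emph{all} $\p,\q\in\dyad$, because with the paper's literal definition of Henkin completeness the scheme is unsatisfiable when $p>q$. Take $\varphi=\underline{\frac{1}{2}}$, $\q=\underline{\frac{1}{4}}$, $\p=\underline{\frac{3}{4}}$: in every structure $\big(\sup_x\varphi\dotsub\q\big)^\M=\big(\p\dotsub\varphi[c_{\varphi,x,\p,\q}/x]\big)^\M=\frac{1}{4}$, so the conjunction (a minimum) takes the value $\frac{1}{4}$ everywhere, and by soundness and completeness no consistent theory contains it. Your fresh-constant argument silently needs the step where, after replacing $c_{\varphi,x,\p,\q}$ by $y$ and generalizing, the quantified Henkin axiom is provably below every dyadic threshold; that holds only when $p\le q$ (if $\sup_x\varphi>q$ then \emph{a fortiori} some value of $\varphi$ exceeds $p$), and is exactly where the restriction enters. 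This is really a defect in the paper's restatement of the definition rather than in your argument---Ben Yaacov and Pedersen impose the appropriate restriction on the pairs $\p,\q$---but as written both the theorem and your Phase 1 need that restriction made explicit.
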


In what follows, the original Henkin model created will be a continuous $L^+$-pre-structure. To make the move to a genuine $L^+$-structure, the following theorem is needed.

\begin{theorem}[Theorem 6.9, \cite{BenYaacov.Pedersen.2010}]\label{thm:metcomp}
   Let $\M'$ be a continuous $L$-pre-structure. Then there is an $L$-structure $\M$ and an elementary $L$-morphism of $\M'$ into $\M$.
\end{theorem}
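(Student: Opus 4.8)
The plan is to build the structure $\M$ as the metric completion of the pseudometric quotient of $\M'$ and to verify that the quotient-then-complete procedure is compatible with all of the structure maps and preserves truth values of wffs.

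First I would form the quotient. Since $\d^{\M'}=d'$ is a pseudometric on $|\M'|$, the relation $a\sim b \iff d'(a,b)=0$ is an equivalence relation; write $[a]$ for the class of $a$ and $|\M'|/{\sim}$ for the quotient, equipped with the induced metric $\bar d([a],[b]):=d'(a,b)$, which is well-defined precisely because $d'$ satisfies the triangle inequality. The key point that makes the functions and predicates descend to the quotient is the uniform continuity built into a \emph{continuous} pre-structure: axiom-style moduli of continuity guarantee that if $d'(a_i,b_i)=0$ for all $i$, then $f^{\M'}(\vec a)\sim f^{\M'}(\vec b)$ (apply the modulus $\Delta(f)$ coordinatewise: $d'(a_i,b_i)<2^{-\Delta(f;n)}$ for every $n$, hence $d'\big(f^{\M'}(\vec a),f^{\M'}(\vec b)\big)\le 2^{-n}$ for every $n$) and likewise $\big|P^{\M'}(\vec a)-P^{\M'}(\vec b)\big|=0$. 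So we may define $f^{\M_0}([\vec a]):=[f^{\M'}(\vec a)]$, $P^{\M_0}([\vec a]):=P^{\M'}(\vec a)$, and $c^{\M_0}:=[c^{\M'}]$, obtaining a continuous $L$-pre-structure $\M_0$ whose pseudometric is an honest metric, together with the quotient map $q:a\mapsto[a]$. A routine induction on terms and then on wffs shows $q$ is an elementary $L$-morphism: the function-symbol clause is immediate from the definition, the predicate clause likewise, the connective clauses $\neg,\half,\dotsub$ are pointwise in $[0,1]$ and so pass through untouched, and the quantifier clauses hold because $q$ is surjective, so $\sup_{a\in|\M'|}$ and $\sup_{[a]\in|\M_0|}$ range over the same set of real values.

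Next I would take the metric completion. Let $(|\M|,d)$ be the completion of $(|\M_0|,\bar d)$, with isometric dense inclusion $j:|\M_0|\hookrightarrow|\M|$. Each $f^{\M_0}$ is uniformly continuous (it has modulus $\Delta(f)$), hence extends uniquely to a uniformly continuous $f^{\M}:|\M|^{\eta(f)}\to|\M|$ with the same modulus; each $P^{\M_0}$ is uniformly continuous into the complete space $[0,1]$, so extends uniquely to $P^{\M}$ with modulus $\Delta(P)$; set $c^{\M}:=j(c^{\M_0})$ and $d=\d^{\M}$. Then $\M$ is an $L$-structure. That $j$ is an $L$-morphism is the assertion that these extensions agree with the original maps on the dense image, which is how they were defined. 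Finally one checks $j$ is elementary by induction on wffs: for terms, $t^{\M,j\circ\sigma}=j\big(t^{\M_0,\sigma}\big)$ by the function-symbol compatibility; the atomic case uses continuity-extension of $P$; connectives are pointwise; and the quantifier case needs density — $\big(\sup_x\varphi\big)^{\M,j\circ\sigma}=\sup_{b\in|\M|}\varphi^{\M,(j\circ\sigma)(x\mapsto b)}$, and since $\varphi^{\M}$ as a function of the assignment is continuous (by the inductive hypothesis plus uniform continuity of all the pieces, or more carefully by an approximation argument) and $j(|\M_0|)$ is dense, this sup equals $\sup_{a\in|\M_0|}\varphi^{\M_0,\sigma(x\mapsto a)}=\big(\sup_x\varphi\big)^{\M_0,\sigma}$. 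Composing, $j\circ q:\M'\to\M$ is the desired elementary $L$-morphism.

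The main obstacle is the quantifier clause of the elementarity induction at the completion stage: moving from $\sup$ over the dense subset $j(|\M_0|)$ to $\sup$ over all of $|\M|$ requires knowing that the value of $\varphi$ depends continuously (indeed uniformly continuously, with an explicit modulus obtained by combining the moduli $\Delta(f),\Delta(P)$ along the parse tree of $\varphi$) on the assignment, so that the supremum is not increased by passing to limit points. This is the place where the hypothesis that $\M'$ is \emph{continuous} — rather than an arbitrary pre-structure — is essential, and it is worth isolating as an auxiliary claim: for every wff $\varphi$ with free variables among $\vec x$, the map $\vec a\mapsto\varphi^{\M_0}(\vec a)$ is uniformly continuous. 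Once that claim is in hand (proved by induction on $\varphi$, tracking the modulus), both the well-definedness of $\varphi^{\M}$ and the density argument for quantifiers go through, and the theorem follows.
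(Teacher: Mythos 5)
Your proof is correct and follows the same route as the cited source: the paper states this result as Theorem 6.9 of Ben Yaacov--Pedersen without reproving it, and their argument is precisely your quotient-then-complete construction, with the key auxiliary fact being that every wff's truth value is uniformly continuous in the assignment (with a modulus read off the parse tree from the $\Delta(f)$ and $\Delta(P)$), which is what lets the $\sup$/$\inf$ clauses pass from the dense image to the completion. You correctly isolate that lemma as the crux, so nothing is missing.
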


We now summarize the construction of the Henkin model in \cite{BenYaacov.Pedersen.2010}. Completeness follows.

\begin{definition}
    Let $\Gamma$ be a maximally consistent, Henkin complete set of $L^+$-wffs. Define the \emph{Henkin continuous $L^+$-pre-structure over $\Gamma$} ($\M_\Gamma'$) as follows.
    \begin{itemize}
        \item $|\M_\Gamma'|$ is the set of all terms of $L^+$.
        \item For every constant symbol $c$ of $L^+$, $c^{\M_\Gamma'}:=c$.
        \item For every function symbol $f$ of $L^+$, define $f^{\M_\Gamma'}$ for each $t_0,...,t_{\eta(f)-1}\in |\M_\Gamma'|$ as
        \[f^{\M_\Gamma'}\big(t_0,...,t_{\eta(f)-1}\big):=f(t_0,...,t_{\eta(f)-1}).\]
        \item For every predicate symbol $P$ of $L^+$, define $P^{\M_\Gamma'}$ for each $t_0,...,t_{\eta(P)-1}\in |\M_\Gamma'|$ as \[P^{\M_\Gamma'}\big(t_0,...,t_{\eta(P)-1}\big):=\sup\big\{ p\in [0,1] : \p\in\dyad \text{ and } \p\dotsub P(t_0,...,t_{\eta(P)-1})\in \Gamma \big\}.\]
    \end{itemize}
    
The \emph{basic assignment} on $\M_\Gamma'$ is defined as $\sigma(x):=x$ for every variable symbol $x$ of $L^+$. By a slight abuse of notation, when $\M_\Gamma'$ is a Henkin continuous $L^+$-pre-structure, by $\varphi^{\M_\Gamma'}$ we mean $\varphi^{\M_\Gamma',\sigma}$, and by $\M_\Gamma'\vDash \varphi$ we mean $\M_\Gamma',\sigma\vDash \varphi$, where $\sigma$ is the basic assignment.

The \emph{Henkin $L^+$-structure over $\Gamma$} ($\M_\Gamma$) is the structure induced by the metric completion of $\big(|\M_\Gamma'|,\d^{\M_\Gamma'}\big)$ and the elementary morphism given in Theorem \ref{thm:metcomp}.
\end{definition}

\begin{theorem}[Theorem 9.4, \cite{BenYaacov.Pedersen.2010}]\label{prop:henksat}
    Let $\Gamma$ be a maximally consistent, Henkin complete set of $L^+$-wffs. Then $\M_\Gamma \vDash \Gamma$.
\end{theorem}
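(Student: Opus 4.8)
The plan is to prove $\M_\Gamma \vDash \Gamma$ in two stages: first establish the statement at the level of the pre-structure $\M_\Gamma'$, i.e. that $\M_\Gamma' \vDash \varphi$ for every $\varphi \in \Gamma$, and then transfer this to $\M_\Gamma$ using the elementary $L^+$-morphism $\iota : \M_\Gamma' \to \M_\Gamma$ from Theorem~\ref{thm:metcomp}. Since $\iota$ is elementary, $\varphi^{\M_\Gamma'} = \varphi^{\M_\Gamma, \iota\circ\sigma}$ for the basic assignment $\sigma$, so $\varphi^{\M_\Gamma'} = 0$ immediately gives $\M_\Gamma \vDash \varphi$ (a sentence's value does not depend on the assignment). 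So the heart of the matter is the pre-structure claim.

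For the pre-structure, the key is a ``truth lemma'': I would prove by induction on the complexity of an $L^+$-wff $\varphi$ that
\[
\varphi^{\M_\Gamma'} = \inf\bigl\{ p : \p \in \dyad \text{ and } (\p \dotsub \varphi) \in \Gamma \bigr\}
  = \sup\bigl\{ 1 - q : \q \in \dyad \text{ and } (\varphi \dotsub \q) \in \Gamma \bigr\},
\]
or some equivalent characterization of $\varphi^{\M_\Gamma'}$ purely in terms of which dyadic comparisons lie in $\Gamma$. The base case $\varphi = P(t_0,\dots,t_{\eta(P)-1})$ is the definition of $P^{\M_\Gamma'}$ (together with a monotonicity/limiting argument using maximal consistency condition~(\ref{lem:maxconsp1}) to match the $\sup$ and $\inf$ forms). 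For the inductive steps I would use the semantic clauses for $\neg$, $\half$, $\dotsub$, $\sup_x$, $\inf_x$ together with provability facts: the propositional axioms \ref{ax:one}--IV and the $\half$-axioms IX--\ref{ax:ten} pin down how $\Gamma$ must treat $\neg\varphi$, $\half\varphi$, $\varphi\dotsub\psi$ relative to $\varphi,\psi$; maximal consistency condition~(\ref{lem:maxconsp2}) (and Lemma~\ref{lem:thisorthat} in the background) ensures $\Gamma$ is ``complete enough'' to decide all the relevant dyadic inequalities; and the quantifier axioms V--VIII together with \emph{Henkin completeness} handle $\sup_x$ and $\inf_x$ --- the Henkin witness constant $c_{\varphi,x,\p,\q}$ is exactly what forces $(\sup_x\varphi)^{\M_\Gamma'}$, a supremum over all terms, to be approximated from below by $\varphi[c_{\varphi,x,\p,\q}/x]^{\M_\Gamma'}$, and axioms~VI--VII give the reverse inequality. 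Note that in $\M_\Gamma'$ the universe is literally the set of terms and $\sigma$ is the identity, so $\varphi[t/x]^{\M_\Gamma'} = \varphi^{\M_\Gamma', \sigma(x \mapsto t)}$, which is what makes the substitution-based axioms bite.

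Once the truth lemma holds, the conclusion is immediate: if $\varphi \in \Gamma$, then trivially $\Gamma \vdash \varphi$, hence $\Gamma \vdash \varphi \dotsub \q$ for every $\q$ (in particular $\q = \underline{0}$, or one uses $\varphi \dotsub \underline{2^{-k}}$ with the reflexivity of $\dotsub$), so the characterization forces $\varphi^{\M_\Gamma'} = 0$, i.e. $\M_\Gamma' \vDash \varphi$; then transfer along $\iota$ as above.

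I expect the main obstacle to be the quantifier step of the truth lemma, specifically the lower bound $(\sup_x \varphi)^{\M_\Gamma'} \ge (\sup_x\varphi)_\Gamma$ (the value predicted by the $\Gamma$-characterization). This is where Henkin completeness is essential and where one must carefully juggle the two dyadic parameters $\p, \q$ in the witness constant $c_{\varphi,x,\p,\q}$ so that the approximation is uniform; the clause $(\sup_x\varphi \dotsub \q)\wedge(\p \dotsub \varphi[c_{\varphi,x,\p,\q}/x]) \in \Gamma$ has to be unpacked (using the interpretation of $\wedge$ as $\min$ and of $\dotsub$) to yield both that $\varphi[c/x]^{\M_\Gamma'} \ge p$ whenever $\sup_x\varphi$ is provably at least $q$, and that this pins $(\sup_x\varphi)^{\M_\Gamma'}$ correctly as a supremum. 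A secondary subtlety is the base case: reconciling the $\sup$ in the definition of $P^{\M_\Gamma'}$ with the $\inf$-form of the truth lemma requires condition~(\ref{lem:maxconsp1}) of maximal consistency, to rule out a ``gap'' between $\sup\{p : (\p\dotsub P(\bar t)) \in \Gamma\}$ and $\inf\{q : (P(\bar t)\dotsub \q)\in\Gamma\}$. All of this is essentially the content of \cite{BenYaacov.Pedersen.2010}, so in the write-up I would cite their Theorem~9.4 for the truth lemma itself and only spell out the transfer along $\iota$ and the final deduction $\varphi\in\Gamma \Rightarrow \varphi^{\M_\Gamma'}=0$.
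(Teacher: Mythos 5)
The paper offers no proof of this statement---it is imported verbatim as Theorem 9.4 of \cite{BenYaacov.Pedersen.2010}---and your plan (a truth lemma for the term pre-structure $\M_\Gamma'$ by induction on wffs, then transfer to the completion along the elementary morphism of Theorem \ref{thm:metcomp}, which is exactly how $\M_\Gamma$ is defined) is precisely the argument of that source, so the approach is the intended one. One correction before writing it up: your displayed characterization has the $\sup$ and $\inf$ interchanged and a stray $1-q$. As written, $\inf\bigl\{p : \p\dotsub\varphi\in\Gamma\bigr\}$ takes the infimum of a set of \emph{lower} bounds (which is $0$ whenever the set is nonempty), and $\sup\bigl\{1-q : \varphi\dotsub\q\in\Gamma\bigr\}$ computes $1-\varphi^{\M_\Gamma'}$. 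The truth lemma you want is
\[
\varphi^{\M_\Gamma'} \;=\; \sup\bigl\{p : \p\in\dyad \text{ and } \p\dotsub\varphi\in\Gamma\bigr\} \;=\; \inf\bigl\{q : \q\in\dyad \text{ and } \varphi\dotsub\q\in\Gamma\bigr\},
\]
which matches the definition of $P^{\M_\Gamma'}$ in the base case; the agreement of the $\sup$ and $\inf$ forms is exactly where condition (\ref{lem:maxconsp1}) of maximal consistency enters, as you note. Your base case and your final deduction that $\varphi\in\Gamma$ forces $\varphi^{\M_\Gamma'}=0$ already implicitly use this corrected form, so nothing else in the outline needs to change.
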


\begin{corollary}[Completeness of Continuous Logic, Theorem 9.5, \cite{BenYaacov.Pedersen.2010}]\label{thm:complete}
    A set of $L$-wffs is consistent if and only if it is (completely) satisfiable.
\end{corollary}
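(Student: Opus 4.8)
The plan is to prove the two implications separately: soundness (``(completely) satisfiable $\Rightarrow$ consistent'') by a routine induction, and completeness (``consistent $\Rightarrow$ (completely) satisfiable'') by assembling the Henkin-style machinery recalled above.

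\emph{Soundness.} I would first check that every instance of the axiom schemata \ref{ax:one}--\ref{ax:modpred} has value $0$ under every assignment in every continuous $L$-pre-structure; this is a direct computation from the recursive clauses for $\neg$, $\half$, $\dotsub$, $\sup$, $\inf$ (for \ref{ax:modfun} and \ref{ax:modpred} it is precisely the assertion that $\Delta(f)$ and $\Delta(P)$ are moduli of continuity, which is built into the notion of a continuous interpretation). Next I would verify that the two inference rules preserve this property: for \emph{modus ponens}, $(\psi\dotsub\varphi)^{\M,\sigma}=0$ together with $\varphi^{\M,\sigma}=0$ gives $\psi^{\M,\sigma}\le\varphi^{\M,\sigma}=0$; for \emph{generalization}, $(\sup_x\varphi)^{\M,\sigma}=\sup_{a\in|\M|}\varphi^{\M,\sigma(x\mapsto a)}$, which vanishes once $\varphi$ is valid. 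An induction on derivation length then yields that every provable wff is valid, and more generally that if $\M,\sigma\vDash\Gamma$ then $\M,\sigma$ satisfies every consequence of $\Gamma$ (the free-variable subtleties of generalization are handled, as in the classical treatment, by the variables-as-constants reduction described below). Hence a satisfiable $\Gamma$ is consistent: an inconsistent $\Gamma$ proves $\underline1$, but $\underline1^{\M}=1\ne 0$ by the interpretations computed above.

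\emph{Completeness.} Let $\Gamma_0$ be a consistent set of $L$-wffs. Adjoin to $L$ a fresh constant $c_x$ for each variable $x$ occurring free in some member of $\Gamma_0$, obtaining an expanded signature $L'$, and let $T$ be the set of $L'$-sentences gotten from $\Gamma_0$ by replacing each such $x$ by $c_x$; $T$ is consistent because derivations translate between $L$ and $L'$ (substituting distinct fresh variables back in for the $c_x$ turns a derivation of an inconsistency from $T$ into one from $\Gamma_0$). By Theorem \ref{thm:gammaext} there is a maximally consistent, Henkin complete set of $(L')^+$-wffs $\Gamma$ extending $T$. Form the Henkin $(L')^+$-structure $\M_\Gamma$ over $\Gamma$; by construction its underlying pseudometric space has been completed and an elementary $(L')^+$-morphism applied via Theorem \ref{thm:metcomp}, so $\M_\Gamma$ is a genuine $(L')^+$-structure, and by Theorem \ref{prop:henksat} we have $\M_\Gamma\vDash\Gamma$, hence $\M_\Gamma\vDash T$. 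Finally take the $L$-reduct of $\M_\Gamma$ together with the assignment $\sigma$ defined by $\sigma(x):=c_x^{\M_\Gamma}$ on the relevant variables (and arbitrarily elsewhere): this $L$-structure with $\sigma$ satisfies $\Gamma_0$, since it was exactly the substitution $x\mapsto c_x$ that produced $T$ from $\Gamma_0$. Thus $\Gamma_0$ is completely satisfiable.

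\emph{Main obstacle.} The mathematical content is entirely contained in the cited results; the only place needing real attention is the mismatch between the corollary (stated for arbitrary wffs) and Theorems \ref{thm:gammaext} and \ref{prop:henksat} (stated for theories, i.e.\ sets of sentences). I therefore expect the genuine work to be the variables-as-constants reduction — confirming that consistency transfers in both directions and that the reduct-and-reassign step really recovers satisfaction of the original open wffs — together with the bookkeeping of the routine axiom verifications on the soundness side.
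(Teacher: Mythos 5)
Your proposal is correct and follows essentially the same route as the paper, which derives this corollary from the Henkin machinery it recalls (Theorem \ref{thm:gammaext} to extend a consistent set to a maximally consistent, Henkin complete $\Gamma$, then Theorem \ref{prop:henksat} to get $\M_\Gamma\vDash\Gamma$), with soundness supplying the converse. Your extra care with the variables-as-constants reduction is a reasonable way to reconcile the corollary's statement for arbitrary wffs with Theorem \ref{thm:gammaext}'s statement for theories, though note that the Henkin pre-structure's basic assignment $\sigma(x)=x$ already handles free variables directly in Theorem \ref{prop:henksat}.
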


Ben Yaacov and Pedersen then introduce important maps from sets of $L$-wffs into $[0,1]$. These maps serve as upper bounds on relative provability and interpretation of sentences following from those sets of $L$-wffs.

\begin{definition}
    Let $\Gamma$ be a set of $L$-wffs. The \emph{degree of truth with respect to $\Gamma$} ($ \ \cdot\text{ }^\circ_\Gamma$) is a map from wffs to $[0,1]$, defined as
    \[\varphi_\Gamma^\circ:=\sup\big\{\varphi^{\M,\sigma}:\mathfrak{M},\sigma\vDash \Gamma \big\}.\]
    	
    The \emph{degree of provability with respect to $\Gamma$} ($ \ \cdot\text{ }^\circledcirc_\Gamma$) is a similar map, defined as
	\[\varphi_\Gamma^\circledcirc:=\inf\big\{p\in[0,1]:\p\in\dyad \text{ and } \Gamma \vdash\varphi\dotsub \p\big\}.\]
\end{definition}

The Completeness Theorem then implies that these maps are the same.

\begin{corollary}[Corollary 9.8, \cite{BenYaacov.Pedersen.2010}]\label{prop:circequiv}
    For any $L^+$-wff $\varphi$ and set of $L$-wffs $\Gamma$, $\varphi_\Gamma ^\circ=\varphi_\Gamma ^\circledcirc$.
\end{corollary}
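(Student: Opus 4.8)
The plan is to prove the two inequalities $\varphi^\circ_\Gamma\le\varphi^\circledcirc_\Gamma$ and $\varphi^\circledcirc_\Gamma\le\varphi^\circ_\Gamma$ separately: the first is soundness, and the second is an application of the Completeness Theorem (Corollary \ref{thm:complete}) together with a little elementary proof theory for the continuous calculus. Since $\varphi$ is an $L^+$-wff, I would regard $\Gamma$ as a set of $L^+$-wffs (every $L$-wff is one) and let all structures range over $L^+$-structures; Corollary \ref{thm:complete} and everything preceding it hold verbatim with $L^+$ in place of $L$.

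For $\varphi^\circ_\Gamma\le\varphi^\circledcirc_\Gamma$, I would first record soundness of the proof system: an induction on derivations shows that every instance of the schemata I.--XV. has value $0$ in every continuous $L^+$-pre-structure under every assignment, and that \emph{modus ponens} and generalization preserve this property. Hence, whenever $\p\in\dyad$ and $\Gamma\vdash\varphi\dotsub\p$, we get $(\varphi\dotsub\p)^{\M,\sigma}=0$, i.e. $\varphi^{\M,\sigma}\le p$, for every $\M,\sigma\vDash\Gamma$; so $\varphi^\circ_\Gamma\le p$, and taking the infimum over all such $\p$ yields $\varphi^\circ_\Gamma\le\varphi^\circledcirc_\Gamma$.

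For $\varphi^\circledcirc_\Gamma\le\varphi^\circ_\Gamma$, put $r:=\varphi^\circ_\Gamma$ (with the convention $\sup\emptyset=0$, which also covers the case that $\Gamma$ is inconsistent). It is enough to show $\Gamma\vdash\varphi\dotsub\p$ for \emph{every} dyadic numeral $\p$ with $p>r$, because then $\varphi^\circledcirc_\Gamma\le\inf\{p:\p\in\dyad,\ p>r\}=r$. So I would fix such a $\p$, choose $\q\in\dyad$ with $r<q<p$, and observe that every $\M,\sigma\vDash\Gamma$ has $\varphi^{\M,\sigma}\le r<q$, hence $(\q\dotsub\varphi)^{\M,\sigma}\ge q-r>0$; thus no model of $\Gamma$ satisfies $\q\dotsub\varphi$, so $\Gamma\cup\{\q\dotsub\varphi\}$ is unsatisfiable, and by Corollary \ref{thm:complete} it is inconsistent, i.e. $\Gamma\cup\{\q\dotsub\varphi\}\vdash\underline{1}$. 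Next I would invoke the deduction theorem for the continuous calculus (cf. \cite{BenYaacov.Pedersen.2010}) to get some $n\in\N$ with $\Gamma\vdash\underline{1}\dotsub n(\q\dotsub\varphi)$. Choosing $k$ with $2^{-k}\le 1/n$, this theorem provably forces $\q\dotsub\varphi$ to be at least $\underline{2^{-k}}$, hence provably forces $\varphi$ to be at most $\underline{q-2^{-k}}$, so elementary dyadic manipulations inside the proof system give $\Gamma\vdash\varphi\dotsub\underline{q-2^{-k}}$; and since $q-2^{-k}<q<p$, monotonicity of $\dotsub$ in its right-hand argument (itself provable) upgrades this to $\Gamma\vdash\varphi\dotsub\p$.

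The step I expect to be the main obstacle is the last one: the passage from ``$\Gamma\cup\{\q\dotsub\varphi\}$ is inconsistent'' to ``$\Gamma\vdash\varphi\dotsub\p$''. One cannot simply say that $\varphi\dotsub\p$ is valid over the models of $\Gamma$ and therefore provable, because derivations are finite while maximal consistency carries the limiting clause (\ref{lem:maxconsp1}) of Definition \ref{def:maxcon}: there are theories $\Gamma$ and wffs $\psi$ with $\psi^{\M}=0$ in every model of $\Gamma$ yet $\Gamma\nvdash\psi$. What saves the argument is the uniform gap $q-r>0$ together with the slack $p-q>0$: the deduction theorem returns a \emph{finite} $n$, and the resulting $n$-fold bound on $\q\dotsub\varphi$ is strong enough to be turned, by a genuine finite derivation, into the dyadic upper bound $\varphi\dotsub\p$. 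The bookkeeping between $L$ and the Henkin-extended signature $L^+$ is routine, since every result used holds over an arbitrary signature.
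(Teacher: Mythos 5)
The paper imports this statement from Ben Yaacov--Pedersen without reproving it, and your argument---soundness for $\varphi^\circ_\Gamma\le\varphi^\circledcirc_\Gamma$, then completeness (Corollary \ref{thm:complete}) plus the Deduction Theorem to convert the inconsistency of $\Gamma\cup\{\q\dotsub\varphi\}$ into a derivation of $\varphi\dotsub\p$ for the reverse inequality---is exactly the standard derivation used in the cited source. The one delicate step, turning $\Gamma\vdash\underline{1}\dotsub n(\q\dotsub\varphi)$ into $\Gamma\vdash\varphi\dotsub\p$ by a finite propositional derivation exploiting the slack $p-q>0$, is correctly identified and is indeed what rescues the argument from the failure of strong completeness, so the proposal is correct.
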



\begin{definition}
    A set of $L$-wffs $\Gamma$ is \defn{complete} if there is a structure $\M$ and assignment $\sigma$ such that for every $L$-wff $\varphi$,
    \[\varphi^\circ_T=\varphi^{\M,\sigma}.\]
    $\Gamma$ is \defn{incomplete} if it is not complete.
\end{definition}

In contrast to the classical case, even if a theory is complete, its set of consequences may not be maximally consistent. This is due to the limiting behavior condition discussed in Definition \ref{def:maxcon}. The Deduction Theorem for continuous logic encounters a similar issue.

\begin{theorem}[Deduction Theorem, Theorem 8.1, \cite{BenYaacov.Pedersen.2010}]
   Let $\Gamma$ be a set of $L$-wffs. Then for every $L$-wff $\psi$, $\Gamma\cup\{\psi\}\vdash \varphi$ if and only if $\Gamma\vdash \varphi\dotsub m\psi$, for some $m\in\mathbb{N}$.
\end{theorem}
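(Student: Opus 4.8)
The plan is to prove both directions syntactically, by surgery on derivations, rather than semantically: although the Completeness Theorem (Corollary~\ref{thm:complete}) and the identity $\varphi^\circ_\Gamma=\varphi^\circledcirc_\Gamma$ are now available, $\Gamma\vdash\varphi$ is in general strictly stronger than ``$\varphi^{\M,\sigma}=0$ for every model $(\M,\sigma)$ of $\Gamma$'' --- the limiting-behaviour phenomenon isolated in clause~\ref{lem:maxconsp1} of Definition~\ref{def:maxcon} is precisely this gap --- so the deduction theorem does not simply fall out of the consequence relation. The engine of the proof is one derived rule: if $\Gamma\vdash\alpha$ and $\Gamma\vdash\beta$ then $\Gamma\vdash\alpha\dotplus\beta$, obtained by two \emph{modus ponens} steps through the schema $\big((\alpha\dotplus\beta)\dotsub\beta\big)\dotsub\alpha$ (which, unwinding the definition of $\dotplus$, is a theorem of the propositional fragment, and whose validity is the one-line inequality $\min\{a{+}b,1\}-b\le a$); iterating it turns $\Gamma\vdash\alpha$ into $\Gamma\vdash m\alpha$ for every $m\ge 1$. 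I will also freely use the monotonicity schema $(\chi\dotsub m'\psi)\dotsub(\chi\dotsub m\psi)$, valid for $m\le m'$, to raise a multiplier inside a provable statement.

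The ($\Leftarrow$) direction is then quick: from $\Gamma\vdash\varphi\dotsub m\psi$ (with $m\ge 1$), weaken to $\Gamma\cup\{\psi\}\vdash\varphi\dotsub m\psi$, derive $\Gamma\cup\{\psi\}\vdash m\psi$ from $\Gamma\cup\{\psi\}\vdash\psi$ by the derived rule, and apply a single \emph{modus ponens}. For ($\Rightarrow$), fix a derivation $\varphi_1,\dots,\varphi_n=\varphi$ of $\varphi$ from $\Gamma\cup\{\psi\}$ and prove by induction on $i$ that $\Gamma\vdash\varphi_i\dotsub m_i\psi$ for some $m_i\in\N$. If $\varphi_i$ is an axiom or lies in $\Gamma$, then $\Gamma\vdash\varphi_i$ and \emph{modus ponens} with the instance $(\varphi_i\dotsub\psi)\dotsub\varphi_i$ of Axiom~\ref{ax:one} gives $m_i=1$. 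If $\varphi_i=\psi$, then $m_i=1$ works since $\psi\dotsub\psi$ is a theorem. If $\varphi_i=\sup_x\varphi_j$ comes from $\varphi_j$ by Generalization --- which, as in the classical Hilbert calculus, presupposes $x$ not free in $\psi$ --- apply Generalization to the inductive hypothesis and then, since $x$ does not occur free in $\psi$, use the quantifier axioms V, VI, and VII to move $\sup_x$ past $m_j\psi$, keeping $m_i=m_j$.

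The \emph{modus ponens} step is the crux, and is where the continuous setting departs from the classical deduction theorem: because the continuous-logic analogue of the classical axiom $\mathsf{S}$ is not available in the same form, the multiplier must grow at each detachment. If $\varphi_i$ comes from earlier $\varphi_j$ and $\varphi_k=\varphi_i\dotsub\varphi_j$, then by the inductive hypothesis and the multiplier-raising remark we may fix a common $r$ with $\Gamma\vdash\varphi_j\dotsub r\psi$ and $\Gamma\vdash(\varphi_i\dotsub\varphi_j)\dotsub r\psi$, and the plan is to reach $m_i=2r$ by a short chain of \emph{modus ponens} steps: first peel off $\varphi_j$ to obtain $\Gamma\vdash\varphi_i\dotsub(\varphi_j\dotplus r\psi)$ (using the valid schema $\big(\varphi_i\dotsub(\varphi_j\dotplus r\psi)\big)\dotsub\big((\varphi_i\dotsub\varphi_j)\dotsub r\psi\big)$); then replace $\varphi_j$ inside by $r\psi$ to obtain $\Gamma\vdash(\varphi_j\dotplus r\psi)\dotsub 2r\psi$ (using $\big((\varphi_j\dotplus r\psi)\dotsub 2r\psi\big)\dotsub\big(\varphi_j\dotsub r\psi\big)$, together with the interderivability of $2r\psi$ and $r\psi\dotplus r\psi$ by associativity of $\dotplus$); then combine the two via the derived rule; and finally close with the ``triangle'' schema $(\alpha\dotsub\gamma)\dotsub\big((\alpha\dotsub\beta)\dotplus(\beta\dotsub\gamma)\big)$, instantiated at $\alpha=\varphi_i$, $\beta=\varphi_j\dotplus r\psi$, $\gamma=2r\psi$, to land on $\Gamma\vdash\varphi_i\dotsub 2r\psi$. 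Taking $i=n$ completes the induction.

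The main obstacle I anticipate is not conceptual but a matter of assembling the right propositional toolkit: each schema quoted above --- $\big((\alpha\dotplus\beta)\dotsub\beta\big)\dotsub\alpha$, the monotonicity schema, $\psi\dotsub\psi$, the two ``peeling'' schemata, the ``triangle'' schema, and associativity and commutativity of $\dotplus$ --- has a wholly routine validity check with $\max$, $\min$, and truncated subtraction on $[0,1]$, but, as noted above, \emph{validity does not automatically yield derivability here}, so each must actually be derived from schemata~\ref{ax:one}--IV (or imported from the propositional fragment of continuous logic). The only other point of care is making the free-variable proviso in the Generalization case explicit --- the same restriction already required in the classical deduction theorem --- so that the held hypothesis $\psi$ is never generalized over.
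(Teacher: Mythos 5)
The paper does not actually prove this statement---it is quoted as background from Ben Yaacov and Pedersen---and your syntactic induction on derivations, with the multiplier doubling at each \emph{modus ponens} step (obtaining $\varphi_i\dotsub 2r\psi$ from $\varphi_j\dotsub r\psi$ and $(\varphi_i\dotsub\varphi_j)\dotsub r\psi$ via the two peeling schemata and the triangle schema), is essentially the argument given in that source and is correct in outline. The two caveats you flag yourself are exactly the right ones and neither is a gap: the auxiliary propositional schemata are all derivable because the propositional fragment generated by axioms I--IV together with the $\tfrac12$-axioms is complete for the $[0,1]$-semantics, and the free-variable proviso in the Generalization case is implicit in how derivations from hypotheses are set up in the cited source (otherwise the left-to-right direction would already fail for $\psi=P(x)$, $\varphi=\sup_x P(x)$).
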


We also note the Generalization Theorem, which will be useful in future work.

\begin{lemma}[Generalization Theorem, Lemma 8.2, \cite{BenYaacov.Pedersen.2010}]
    Let $\Gamma$ be a set of $L^+$-wffs and $\varphi$ an $L^+$-wff. If $x$ does not appear freely in $\Gamma$ and $\Gamma\vdash \varphi$, then $\Gamma\vdash \sup_x \varphi$.
\end{lemma}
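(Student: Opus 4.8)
The plan is to induct on the length of a derivation of $\varphi$ from $\Gamma$, proving the stronger statement that $\Gamma\vdash\sup_x\psi$ for \emph{every} formula $\psi$ occurring in the derivation (each such $\psi$ is the endpoint of a shorter initial segment, which is itself a derivation of $\psi$ from the same $\Gamma$, so the induction hypothesis applies). There are four cases according to the last inference. If $\psi$ is a logical axiom, then $\vdash\psi$, and one application of the generalization rule gives $\vdash\sup_x\psi$, hence $\Gamma\vdash\sup_x\psi$. If $\psi\in\Gamma$, then $x$ does not appear freely in $\psi$ by hypothesis, so axiom schema VII supplies the theorem $\sup_x\psi\dotsub\psi$; together with $\Gamma\vdash\psi$, a single application of modus ponens gives $\Gamma\vdash\sup_x\psi$. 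If $\psi$ is obtained by modus ponens from earlier formulas $\chi$ and $\psi\dotsub\chi$, then the induction hypothesis yields $\Gamma\vdash\sup_x\chi$ and $\Gamma\vdash\sup_x(\psi\dotsub\chi)$; since axiom schema V gives the theorem $(\sup_x\psi\dotsub\sup_x\chi)\dotsub\sup_x(\psi\dotsub\chi)$, modus ponens first produces $\Gamma\vdash\sup_x\psi\dotsub\sup_x\chi$ and then, against $\Gamma\vdash\sup_x\chi$, produces $\Gamma\vdash\sup_x\psi$.

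The remaining case, in which $\psi=\sup_y\chi$ is obtained from $\chi$ by the generalization rule, is the one where the hypothesis that $x$ is not free in $\Gamma$ does the real work, and it is where I expect the only genuine friction. From the induction hypothesis we have $\Gamma\vdash\sup_x\chi$; applying generalization on $y$ (legitimate because the original application was) gives $\Gamma\vdash\sup_y\sup_x\chi$, and it then remains only to commute the two quantifiers. When $y=x$ this is immediate, since $\sup_x\sup_x\chi$ and $\sup_x\chi$ are interderivable via axiom schemata VI and VII. When $y\neq x$, I would derive the theorem $\sup_x\sup_y\chi\dotsub\sup_y\sup_x\chi$ by chaining two instances of axiom schema VI to get $\vdash\chi\dotsub\sup_y\sup_x\chi$ and then pulling $\sup_y$ and afterward $\sup_x$ across the $\dotsub$ using the derived rule ``if $\vdash\alpha\dotsub\beta$ and $z$ is not free in $\beta$, then $\vdash\sup_z\alpha\dotsub\beta$'' (itself a quick consequence of generalization, axiom schema V, axiom schema VII, and transitivity of $\dotsub$); a concluding modus ponens then gives $\Gamma\vdash\sup_x\sup_y\chi$.

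The main obstacle is therefore bookkeeping rather than depth: one must verify at each step that the generalization rule is being applied in accordance with its eigenvariable side condition, and in the generalization case one needs the small derived rules (transitivity of $\dotsub$ and ``generalizing the antecedent'') together with commutativity of $\sup$, all of which are routine and, in all likelihood, already available from the propositional fragment and the earlier lemmas of \cite{BenYaacov.Pedersen.2010}. In fact, if the proof calculus attaches the usual eigenvariable side condition directly to the generalization rule in derivations from hypotheses, then the entire lemma collapses to a single observation: since $x$ is not free in $\Gamma$, that side condition is automatically met, so one may simply append ``$\sup_x\varphi$, by generalization'' to any derivation of $\varphi$ from $\Gamma$. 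I would present the induction above for robustness and record this shortcut as a remark.
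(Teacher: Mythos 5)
Your proof is correct: the induction on the derivation, with axiom schema VII handling hypotheses from $\Gamma$ (where the freeness assumption is used), axiom schema V handling modus ponens, and the quantifier-commutation argument handling generalization, is the standard argument and matches how this lemma is proved in the cited source. Note that the present paper does not prove this statement at all --- it is quoted verbatim as Lemma 8.2 of \cite{BenYaacov.Pedersen.2010} --- so there is no in-paper proof to compare against; your observation that the lemma trivializes if the generalization rule is applied without restriction in derivations from hypotheses is also apt, since that convention is exactly what the freeness hypothesis is there to guard.
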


And lastly, we note the following lemma of Calvert's.

\begin{lemma}[Lemma 4.6, \cite{Calvert.2011}]\label{lem:sigext}
    There is an effective procedure which extends $L$ to its Henkin extended signature $L^+$. 
\end{lemma}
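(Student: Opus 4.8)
The plan is to exhibit $L^+$ as the union of an effectively and uniformly generated chain of signatures $L = L_0 \subseteq L_1 \subseteq L_2 \subseteq \cdots$ and to push the effective numbering through to the limit. Set $L_0 := L$; given $L_m$, let $L_{m+1}$ be obtained from $L_m$ by adjoining, for every $L_m$-wff $\varphi$, every $x \in \var$, and every $\p, \q \in \dyad$, a new constant symbol $c_{\varphi, x, \p, \q}$ (which we may take to literally be a suitably tagged code of the quadruple $(\varphi, x, \p, \q)$, so that distinct quadruples give distinct symbols and none of them is a logical symbol or a symbol of $L$). Because every wff is a finite string and hence mentions only finitely many symbols, every $(\bigcup_m L_m)$-wff is already an $L_m$-wff for some $m$; a routine induction then shows that $\bigcup_m L_m$ is the smallest signature extending $L$ that contains $c_{\varphi,x,\p,\q}$ for every one of its own wffs $\varphi$, that is, $\bigcup_m L_m = L^+$.

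I would then verify by recursion on $m$ that the chain is \emph{uniformly} effectively numbered, i.e., that there is a single algorithm taking $(m,n)$ to the $n$-th symbol of $L_m$, together with its arity and an index of a modulus of continuity whenever that symbol is a predicate or function symbol. The base case is the given effective numbering of $L$. For the step, note first that $\pred^{L_m} = \pred$ and $\fun^{L_m} = \fun$ for all $m$ --- only constants are ever added --- so the arity-and-modulus data is read straight off the effective numbering of $L$ and never changes, while the new constants carry no such data. For the enumeration of symbols, given an effective numbering of $L_m$, item \ref{it:eff} of Remark \ref{rem:morecon} provides an effective enumeration of the $L_m$-wffs; pairing this with the fixed effective enumerations of $\var$ and of $\dyad$ yields an effective enumeration of the constants adjoined at stage $m+1$, and hence an effective numbering of $L_{m+1}$, obtained uniformly from that of $L_m$. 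Dovetailing the stages --- sending the pair $\langle m, n \rangle$ to the $n$-th symbol of $L_m$ --- now gives an effective map of $\N$ onto $\bigcup_m L_m = L^+$; and since every predicate or function symbol of $L^+$ lies in $\pred \cup \fun$, the arity-and-modulus procedure required for $L^+$ by Definition \ref{def:effnum} is literally that of $L$, applied after decoding $\langle m, n\rangle$ to the corresponding $L$-symbol. All of this is produced effectively from an index for $L$, which is the claim.

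The only real obstacle is the self-referential shape of the definition: the constant symbols of $L^+$ are indexed by the wffs of $L^+$. The resolution is exactly the stratification above --- a Henkin constant introduced at stage $m+1$ is indexed by a wff over the symbols of $L_m$ alone, so the apparent circularity unwinds into a genuine recursion on the stage number $m$, which is well-founded. The single external fact that makes each stage feed the next is item \ref{it:eff} of Remark \ref{rem:morecon}: an effectively numbered signature has effectively enumerable wffs; without it the induction could not even begin.
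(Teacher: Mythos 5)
Your proof is correct and follows essentially the same route as the source it is drawn from: the paper states this lemma without proof, deferring to Calvert's Lemma 4.6, whose argument is precisely your stratification of $L^+$ into stages $L_m$ (adding Henkin constants only for wffs of the previous stage, so the apparent self-reference unwinds into a recursion on $m$) together with a dovetailed effective numbering of the union. The observation that only constants are added, so the arity-and-modulus data of Definition \ref{def:effnum} is inherited verbatim from $L$, correctly disposes of the remaining requirements.
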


\section{Main result}\label{sec:main}

\subsection{Model-theoretic preliminaries}\label{sub:mod}

There are four important model-theoretic propositions which extend the results of \cite{BenYaacov.Pedersen.2010} and are useful for the construction of the effective completeness theorem.

\begin{proposition}\label{prop:circgeq}
    Let $\Gamma$ be a set of $L$-wffs and $B$ a finite set of $L$-wffs. Then for every $L$-wff $\varphi$, 
    \[\varphi_{\Gamma \cup B}^\circ\leq \Big(\varphi\dotsub \bigvee_{\theta\in B}\theta\Big)_\Gamma^\circ.\]
\end{proposition}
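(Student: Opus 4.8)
The plan is to unwind the definition of the degree of truth on both sides and reduce the inequality to a pointwise statement about truth values in models of $\Gamma$. Recall that
\[
\varphi_{\Gamma\cup B}^\circ=\sup\big\{\varphi^{\M,\sigma}:\M,\sigma\vDash\Gamma\cup B\big\}
\qquad\text{and}\qquad
\Big(\varphi\dotsub\textstyle\bigvee_{\theta\in B}\theta\Big)_\Gamma^\circ=\sup\Big\{\big(\varphi\dotsub\textstyle\bigvee_{\theta\in B}\theta\big)^{\M,\sigma}:\M,\sigma\vDash\Gamma\Big\}.
\]
So it suffices to show that for every $\M$ and $\sigma$ with $\M,\sigma\vDash\Gamma$, the value $\big(\varphi\dotsub\bigvee_{\theta\in B}\theta\big)^{\M,\sigma}$ is an upper bound for the set $\{\varphi^{\M',\sigma'}:\M',\sigma'\vDash\Gamma\cup B\}$ — or, more efficiently, that every model of $\Gamma\cup B$ already witnesses a small enough value on the right. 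Concretely, I would argue: if $\M,\sigma\vDash\Gamma\cup B$, then in particular $\M,\sigma\vDash\theta$ for each $\theta\in B$, i.e. $\theta^{\M,\sigma}=0$ for all $\theta\in B$. Using the interpretation rule for finite disjunctions (item (a) in the list of shorthand interpretations, applied inductively to the finite set $B$), we get $\big(\bigvee_{\theta\in B}\theta\big)^{\M,\sigma}=\max_{\theta\in B}\theta^{\M,\sigma}=0$. Then, by the interpretation rule for $\dotsub$,
\[
\Big(\varphi\dotsub\textstyle\bigvee_{\theta\in B}\theta\Big)^{\M,\sigma}=\max\big\{\varphi^{\M,\sigma}-0,\,0\big\}=\varphi^{\M,\sigma}.
\]
Hence every model–assignment pair of $\Gamma\cup B$ contributes the \emph{same} value $\varphi^{\M,\sigma}$ to both suprema, and since each such pair is in particular a model–assignment pair of $\Gamma$, the supremum defining $\big(\varphi\dotsub\bigvee_{\theta\in B}\theta\big)_\Gamma^\circ$ is taken over a larger index set. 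Therefore
\[
\varphi_{\Gamma\cup B}^\circ=\sup\big\{\varphi^{\M,\sigma}:\M,\sigma\vDash\Gamma\cup B\big\}
=\sup\Big\{\big(\varphi\dotsub\textstyle\bigvee_{\theta\in B}\theta\big)^{\M,\sigma}:\M,\sigma\vDash\Gamma\cup B\Big\}
\leq\sup\Big\{\big(\varphi\dotsub\textstyle\bigvee_{\theta\in B}\theta\big)^{\M,\sigma}:\M,\sigma\vDash\Gamma\Big\}
=\Big(\varphi\dotsub\textstyle\bigvee_{\theta\in B}\theta\Big)_\Gamma^\circ,
\]
which is the claimed inequality.

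There are two small points of care rather than genuine obstacles. First, $B$ is a finite set of wffs, and $\bigvee_{\theta\in B}\theta$ must be read via the shorthand table, so one should note that the interpretation $(\varphi\vee\psi)^{\M,\sigma}=\max\{\varphi^{\M,\sigma},\psi^{\M,\sigma}\}$ extends by an obvious induction to any nonempty finite $B$ (and if $B=\emptyset$, the statement should be read with the convention that the empty disjunction is $\underline 0$, interpreted as $0$, so the inequality degenerates to $\varphi_\Gamma^\circ\le\varphi_\Gamma^\circ$; alternatively the proposition is only invoked for nonempty $B$). Second, one must be slightly careful that $\varphi$ may contain free variables, so the supremum ranges over pairs $(\M,\sigma)$ and not merely structures; this is exactly why the argument is phrased at the level of assignments throughout, and nothing changes. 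The "main obstacle" is thus really just bookkeeping: making sure the disjunction shorthand and the $\dotsub$ interpretation are applied correctly and that the index sets of the two suprema are compared in the right direction (models of $\Gamma\cup B$ form a subclass of models of $\Gamma$, which is what makes the inequality go the way it does).

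I would present the proof in essentially the four displayed steps above: (1) reduce to a pointwise claim by unfolding $\cdot_\Gamma^\circ$; (2) observe $\big(\bigvee_{\theta\in B}\theta\big)^{\M,\sigma}=0$ for any $\M,\sigma\vDash B$ using interpretation rule (a); (3) deduce $\big(\varphi\dotsub\bigvee_{\theta\in B}\theta\big)^{\M,\sigma}=\varphi^{\M,\sigma}$ from the interpretation rule for $\dotsub$; (4) compare the two suprema, using that $\{(\M,\sigma):\M,\sigma\vDash\Gamma\cup B\}\subseteq\{(\M,\sigma):\M,\sigma\vDash\Gamma\}$. No appeal to completeness, the Deduction Theorem, or $\cdot_\Gamma^\circledcirc$ is needed for this direction — though it is worth remarking that via Corollary \ref{prop:circequiv} the inequality can equivalently be read as a statement about degrees of provability, which is presumably how it will be used downstream.
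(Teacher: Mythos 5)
Your proposal is correct and follows essentially the same route as the paper's proof: both hinge on the observation that any model--assignment pair satisfying $\Gamma\cup B$ makes $\bigvee_{\theta\in B}\theta$ evaluate to $0$, hence makes $\varphi\dotsub\bigvee_{\theta\in B}\theta$ evaluate to $\varphi^{\M,\sigma}$, and then compares suprema (the paper phrases this via an arbitrary dyadic lower bound $\overline{p}<\varphi^\circ_{\Gamma\cup B}$ rather than your direct nested-index-set comparison, but the content is identical). Your explicit handling of the empty-$B$ and free-variable bookkeeping is a welcome addition.
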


\begin{proof}
    
    Fix an $L$-wff $\varphi$. If $\varphi^\circ_{\Gamma\cup B}=0$, the result follows trivially. Thus suppose $\varphi^\circ_{\Gamma\cup B}>0$. Notably, this implies that $\Gamma\cup B$ is consistent. Fix $p\in\mathrm{Dyad}_L$ such that $\overline{p}<\varphi_{\Gamma \cup B}^\circ$. Then there is some $L$-structure $\M$ and assignment $\sigma$ such that $\M,\sigma\vDash \Gamma\cup B$ while $\varphi^{\M,\sigma}>\overline{p}$. But, clearly, since  $\M,\sigma\vDash B$, $\Big(\bigvee_{\theta\in B}\theta\Big)^{\M,\sigma}=0$. Hence, $\Big(\varphi\dotsub \bigvee_{\theta\in B}\theta\Big)^{\M,\sigma}>\overline{p}$. Then since $\M,\sigma\vDash \Gamma$, this implies that $\Big(\varphi\dotsub \bigvee_{\theta\in B}\theta\Big)^\circ_{\Gamma}>\overline{p}$. Since this is true for every $\overline{p}<\varphi_{\Gamma \cup B}^\circ$, we have that $\varphi_{\Gamma \cup B}^\circ\leq \Big(\varphi\dotsub \bigvee_{\theta\in B}\theta\Big)_\Gamma^\circ$.

\end{proof}


\begin{proposition}\label{prop:eqone}
    Let $\Gamma$ be a set of $L$-wffs and $B$ a finite set of $L$-wffs such that $\Gamma\cup B$ is consistent. Then there are infinitely many $L$-wffs $\varphi$ such that 
    \[\Big(\varphi\dotsub \bigvee_{\theta\in B}\theta\Big)_\Gamma^\circ=1.\]
\end{proposition}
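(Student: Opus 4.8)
The plan is to reduce the claim to the trivial observation that there are infinitely many $L$-wffs which interpret to $1$ in every pre-structure; the real point is that the consistency of $\Gamma\cup B$ hands us a single model in which $\bigvee_{\theta\in B}\theta$ already vanishes, so the disjunction over $B$ becomes harmless.

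First I would invoke the Completeness Theorem (Corollary~\ref{thm:complete}): since $\Gamma\cup B$ is consistent it is satisfiable, so I fix an $L$-structure $\M$ and an assignment $\sigma$ with $\M,\sigma\vDash\Gamma\cup B$. For each $\theta\in B$ we then have $\theta^{\M,\sigma}=0$, whence, by the interpretation of $\vee$ (and, in the degenerate case $B=\emptyset$, by item~\ref{subex:zero}), $\big(\bigvee_{\theta\in B}\theta\big)^{\M,\sigma}=0$. The second step is to observe that for any $L$-wff $\varphi$ with $\varphi^{\M,\sigma}=1$ we get $\big(\varphi\dotsub\bigvee_{\theta\in B}\theta\big)^{\M,\sigma}=\max\{1-0,0\}=1$; since $\M,\sigma\vDash\Gamma$, this single witness forces $\big(\varphi\dotsub\bigvee_{\theta\in B}\theta\big)^\circ_\Gamma\ge 1$, and as the degree of truth lies in $[0,1]$ it must in fact equal $1$.

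The third step is simply to produce infinitely many pairwise distinct $L$-wffs $\varphi$ with $\varphi^{\M,\sigma}=1$, and here any trivial syntactic family does the job: for instance the dyadic numerals $\underline{\frac{2^k}{2^k}}$ for $k\in\N$, each of which interprets to $\frac{2^k}{2^k}=1$ in every $L$-pre-structure by item~\ref{subex:dyad} and which are visibly distinct as strings (alternatively one could use $m\,\underline 1$ for $m\ge 1$, or $\underline 1\vee\psi$ for arbitrary $L$-wffs $\psi$). Feeding each such $\varphi$ through the second step yields infinitely many $\varphi$ with $\big(\varphi\dotsub\bigvee_{\theta\in B}\theta\big)^\circ_\Gamma=1$, which is the assertion.

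I do not anticipate a genuine obstacle; this is essentially a bookkeeping lemma for the constructions to come. The only points to watch are that one should use a common model of $\Gamma\cup B$ rather than separate models of $\Gamma$ and of $B$ (so that $\bigvee_{\theta\in B}\theta$ can be forced to $0$ in a structure that still satisfies $\Gamma$), that the truncation built into $\dotsub$ is irrelevant because the difference $1-0$ is already nonnegative, and that the value $1$ is actually attained in $\M,\sigma$ (not merely approached), so the supremum defining the degree of truth genuinely equals $1$. In short, all of the content is in the word ``infinitely,'' which any obviously infinite family of $\underline 1$-equivalent sentences supplies.
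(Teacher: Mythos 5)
Your proof is correct, but it takes a more elementary route than the paper's. You go straight to the semantics: consistency of $\Gamma\cup B$ plus the Completeness Theorem (Corollary \ref{thm:complete}) gives a single model $\M,\sigma$ of $\Gamma\cup B$ in which $\bigvee_{\theta\in B}\theta$ vanishes, and then any of the infinitely many syntactically distinct wffs that are identically $1$ (your $\underline{\frac{2^k}{2^k}}$, or $m\,\underline{1}$) witnesses the value $1$ directly; you correctly note that the supremum in $\cdot^\circ_\Gamma$ is attained at this witness because $\M,\sigma\vDash\Gamma$. The paper instead stays one level up: it picks an arbitrary wff $\varphi$ with $\Gamma\cup B\nvdash\varphi$, uses Corollary \ref{prop:circequiv} to get $\varphi^\circ_{\Gamma\cup B}>1/M$, amplifies to $(m\varphi)^\circ_{\Gamma\cup B}=1$ for $m\geq M$, and then applies Proposition \ref{prop:circgeq} to transfer this to $\big(m\varphi\dotsub\bigvee_{\theta\in B}\theta\big)^\circ_\Gamma=1$. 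Your argument essentially inlines the proof of Proposition \ref{prop:circgeq} in the special case where the witness evaluates to exactly $1$, so it avoids both that proposition and Corollary \ref{prop:circequiv}; what the paper's version buys is a richer family of witnesses (amplifications $m\varphi$ of \emph{any} unprovable $\varphi$, not just tautologically-$1$ sentences), though for the only application (halting of the search in Lemma \ref{lem:maxconsset}) sheer infinitude is all that is needed, so your family suffices. Your handling of the edge cases (the empty disjunction read as $\underline{0}$, the irrelevance of the truncation in $\dotsub$ when the difference is $1-0$) is also sound.
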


\begin{proof}
    Recall that $\Gamma\cup B$ is consistent if and only if there is some $L$-wff $\varphi$ such that $\Gamma\cup B \nvdash \varphi$. By Corollary \ref{prop:circequiv}, $\varphi_{\Gamma\cup B}^\circ>\frac{1}{M}$, for some $M\in\mathbb{N}$. It follows that for every $m\geq M$, $(m\varphi)_{\Gamma\cup B}^\circ =1$. Hence, by Proposition \ref{prop:circgeq}, $\Big(\varphi\dotsub \bigvee_{\theta\in B}\theta\Big)_\Gamma^\circ=1$, for every $m\geq M$.
\end{proof}

\begin{proposition}\label{prop:supequiv}
    Let $L$ be a signature, $T$ an $L$-theory, and $\varphi$ an $L$-wff with free variables $\vec{x}$. Then
    \[\varphi^\circ_T=\big(\sup_{\vec{x}}\varphi\big)^\circ_T.\]
\end{proposition}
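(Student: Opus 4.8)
The plan is to argue purely semantically, straight from the definition $\varphi^\circ_T=\sup\{\varphi^{\M,\sigma}:\M,\sigma\vDash T\}$, exploiting that $T$ is a \emph{theory}, i.e.\ a set of sentences, so that ``$\M,\sigma\vDash T$'' does not depend on $\sigma$.

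First I would unwind the shorthand $\sup_{\vec x}\varphi=\sup_{x_0}\cdots\sup_{x_n}\varphi$. By the recursive clause for $\sup$ in the definition of the value of a wff, together with the elementary fact that an iterated supremum over $|\M|$ equals a single supremum over the product, we get, for every structure $\M$ and assignment $\sigma$ (recall $\vec x=(x_0,\dots,x_n)$),
\[\big(\sup_{\vec x}\varphi\big)^{\M,\sigma}=\sup_{\vec a\in|\M|^{n+1}}\varphi^{\M}(\vec a),\]
the right-hand side being independent of $\sigma$ since $\sup_{\vec x}\varphi$ is a sentence. Since $\vec x$ lists exactly the free variables of $\varphi$, for any assignment $\sigma$ we have $\varphi^{\M,\sigma}=\varphi^{\M}\big(\sigma(\vec x)\big)$, and as $\sigma$ varies $\sigma(\vec x)=(\sigma(x_0),\dots,\sigma(x_n))$ ranges over all of $|\M|^{n+1}$. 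Hence for a fixed $\M$,
\[\sup_{\sigma}\varphi^{\M,\sigma}=\sup_{\vec a\in|\M|^{n+1}}\varphi^{\M}(\vec a)=\big(\sup_{\vec x}\varphi\big)^{\M}.\]

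It then remains only to take the supremum of this identity over all $\M$ modelling $T$. Because the members of $T$ are sentences, ``$\M,\sigma\vDash T$'' holds for some $\sigma$ iff it holds for all $\sigma$ iff $\M\vDash T$; therefore
\[\varphi^\circ_T=\sup\big\{\varphi^{\M,\sigma}:\M\vDash T\big\}=\sup\big\{\big(\sup_{\vec x}\varphi\big)^{\M}:\M\vDash T\big\}=\big(\sup_{\vec x}\varphi\big)^\circ_T,\]
as desired. (One can equally phrase this as two inequalities: ``$\le$'' by instantiating $\vec a:=\sigma(\vec x)$ in $\big(\sup_{\vec x}\varphi\big)^{\M,\sigma}\ge\varphi^{\M}(\vec a)$; ``$\ge$'' because for any $\M,\sigma\vDash T$ and any $\vec a\in|\M|^{n+1}$ we still have $\M,\sigma(\vec x\mapsto\vec a)\vDash T$, whence $\varphi^{\M}(\vec a)\le\varphi^\circ_T$, and one takes the supremum over $\vec a$.) I do not anticipate a genuine obstacle here; the step most liable to be glossed over is the observation that, since $T$ is a set of sentences, ``$\M,\sigma\vDash T$'' is independent of $\sigma$ — this is precisely what licenses replacing an arbitrary $\sigma$ by $\sigma(\vec x\mapsto\vec a)$ in the ``$\ge$'' direction — together with the routine facts that $\sup_{\vec x}$ genuinely computes the supremum over $|\M|^{n+1}$ and that rebinding the free variables of $\varphi$ to their current values leaves its value unchanged.
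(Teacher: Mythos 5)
Your proof is correct, but it takes a genuinely different route from the paper's. The paper argues syntactically: it first invokes Corollary \ref{prop:circequiv} to replace the degree of truth by the degree of provability, observes that since $T$ consists of sentences no variable of $\vec{x}$ occurs freely in $T$, and then uses the Generalization Theorem (together with axiom schemata V--VII) to see that $T\vdash \varphi\dotsub \p$ and $T\vdash \sup_{\vec{x}}\varphi\dotsub\p$ hold for exactly the same dyadic numerals $\p$, whence the two infima coincide. You instead stay entirely on the semantic side, unwinding the definition $\varphi^\circ_T=\sup\{\varphi^{\M,\sigma}:\M,\sigma\vDash T\}$ and using the same key observation --- that satisfaction of a set of \emph{sentences} is independent of the assignment --- to commute the supremum over assignments with the supremum over models and identify it with $\big(\sup_{\vec{x}}\varphi\big)^{\M}$. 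Your version is more elementary and self-contained, since it does not pass through the completeness theorem on which Corollary \ref{prop:circequiv} rests; the paper's version is shorter given the machinery it has already imported and keeps the argument in the proof-theoretic idiom that the subsequent Lemma \ref{lem:maxconsset} works in. The one step you rightly flag as the crux --- that $\M,\sigma\vDash T$ for some $\sigma$ iff for all $\sigma$, which licenses passing to $\sigma(\vec{x}\mapsto\vec{a})$ --- is exactly the semantic counterpart of the paper's remark that ``none of $\vec{x}$ appear freely in $T$,'' so the two arguments hinge on the same fact.
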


\begin{proof}
    Fix a signature $L$, an $L$-theory $T$, and an $L$-wff $\varphi$ with free variables $\vec{x}$. Notice that since $T$ contains only $L$-sentences, none of $\vec{x}$ appear freely in $T$. It follows via Corollary \ref{prop:circequiv} and the Generalization Theorem that
    \eqn{\varphi^\circ_T&=\inf\big\{\overline{p}:p\in\mathrm{Dyad}_L \text{ and } T\vdash\varphi\dotsub p\big\}\\
    &=\inf\big\{\overline{p}:p\in\mathrm{Dyad}_L \text{ and } T\vdash\sup_{\vec{x}}\varphi\dotsub p\big\}\\
    &=\big(\sup_{\vec{x}}\varphi\big)^\circ_T.}
\end{proof}

\begin{proposition}\label{lem:repeq}
    Let $T$ be an $L$-theory. Then for every $L^+$-wff $\theta$,
    \[\big(\sup_{\vec{x}}\theta[\vec{x}/\vec{c}]\big)^\circ_T= \theta^\circ_T,\]
    where $\vec{c}$ is the tuple of all constants from $L^+$, but not in $L$, appearing in $\theta$.
\end{proposition}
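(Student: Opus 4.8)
The plan is to argue semantically, using the description of $\cdot^\circ_T$ as a supremum of truth values over models of $T$ (Corollary \ref{prop:circequiv} is not even needed, though it could be used). We may assume the variables $\vec{x}$ are distinct and do not occur in $\theta$, so that the substitution is correct; write $\psi := \theta[\vec{x}/\vec{c}]$. Since, by hypothesis, every constant of $L^+$ not in $L$ that occurs in $\theta$ is among $\vec{c}$, the wff $\psi$ is an $L$-wff, and $\theta = \psi[\vec{c}/\vec{x}]$. Two structural remarks will be used throughout: first, $\theta^\circ_T$ is the supremum of $\theta^{\M,\sigma}$ over $L^+$-structures $\M$ with $\M,\sigma\vDash T$, whereas $\big(\sup_{\vec{x}}\psi\big)^{\M,\sigma}$ depends only on the $L$-reduct of $\M$; second, since $T$ consists of $L$-sentences, the $L$-reduct of an $L^+$-structure modelling $T$ still models $T$, and conversely every $L$-structure modelling $T$ has an $L^+$-expansion modelling $T$ with the Henkin constants $\vec{c}$ interpreted as any prescribed tuple (interpret the remaining Henkin constants arbitrarily).

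The only ingredient beyond this is the standard substitution lemma: for every $L^+$-structure $\M$ and assignment $\rho$, $\theta^{\M,\rho} = \psi^{\M,\rho(\vec{x}\mapsto\vec{c}^{\,\M})}$, where $\vec{c}^{\,\M}$ denotes the tuple of interpretations of $\vec{c}$ in $\M$. This is proved by a routine induction on the structure of terms and wffs; replacing a constant by a fresh variable introduces no capture, so the content is simply that assigning each new variable $x_i$ the value $c_i^{\M}$ reproduces the interpretation of the constant $c_i$.

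Granting this, both inequalities are short. For $\theta^\circ_T \leq \big(\sup_{\vec{x}}\psi\big)^\circ_T$: fix $\M,\sigma\vDash T$. By the substitution lemma and the fact that $\psi$ is an $L$-wff, $\theta^{\M,\sigma} = \psi^{\M,\sigma(\vec{x}\mapsto\vec{c}^{\,\M})}$, which is at most $\sup_{\vec{a}}\psi^{\M,\sigma(\vec{x}\mapsto\vec{a})} = \big(\sup_{\vec{x}}\psi\big)^{\M,\sigma}$; since the $L$-reduct of $\M$ models $T$, this is at most $\big(\sup_{\vec{x}}\psi\big)^\circ_T$. Taking the supremum over $(\M,\sigma)$ gives the inequality. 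For $\big(\sup_{\vec{x}}\psi\big)^\circ_T \leq \theta^\circ_T$: fix an $L$-structure $\Nstruct$ and assignment $\tau$ with $\Nstruct,\tau\vDash T$, and a tuple $\vec{a}$ from $|\Nstruct|$. Let $\Nstruct_{\vec{a}}$ be the $L^+$-expansion of $\Nstruct$ with $\vec{c}^{\,\Nstruct_{\vec{a}}} = \vec{a}$; then $\Nstruct_{\vec{a}},\tau\vDash T$, and the substitution lemma gives $\psi^{\Nstruct,\tau(\vec{x}\mapsto\vec{a})} = \theta^{\Nstruct_{\vec{a}},\tau} \leq \theta^\circ_T$. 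Taking the supremum over $\vec{a}$ yields $\big(\sup_{\vec{x}}\psi\big)^{\Nstruct,\tau} \leq \theta^\circ_T$, and then the supremum over $(\Nstruct,\tau)$ gives the inequality. Combining the two proves the claim.

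The only real obstacle is bookkeeping rather than depth: one must keep straight that $\cdot^\circ_T$ applied to an $L^+$-wff ranges over $L^+$-structures while the $\sup_{\vec{x}}\psi$ version may be computed on $L$-reducts (legitimate because $T$ is a set of $L$-sentences), and one must state the substitution lemma in precisely the form that matches interpretations of the eliminated constants with values of the fresh variables. As an alternative, after establishing the ``renaming a constant as a fresh variable preserves $\cdot^\circ_T$'' half of the argument one could invoke Proposition \ref{prop:supequiv} to absorb the $\sup_{\vec{x}}$, but the self-contained version above is of comparable length.
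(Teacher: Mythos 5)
Your proof is correct and follows essentially the same route as the paper's: both directions are handled by passing between $L$-structures and their $L^+$-expansions with $\vec{c}$ interpreted as a prescribed tuple, together with the substitution identity $\theta^{\M,\rho}=\big(\theta[\vec{x}/\vec{c}]\big)^{\M,\rho(\vec{x}\mapsto\vec{c}^{\,\M})}$. The only (cosmetic) difference is that you absorb the $\sup_{\vec{x}}$ pointwise inside each model rather than appealing to Proposition \ref{prop:supequiv} at the end, an alternative you yourself note.
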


\begin{proof}
    Fix an $L$-theory $T$ and an $L^+$-wff $\theta$. Recall that no variable from $\vec{x}$ appears freely in $T$, nor does any constant in $\vec{c}$ appear in $T$, since it is an $L$-theory. Hence for every $L$-structure $\M$ such that $\M\vDash T$, and every tuple $\vec{a}\in |\M|$, there is an $L^+$-structure $\M^+_{\vec{a}}$ such that $\vec{c}^{\M^+_{\vec{a}}}=\vec{a}$ and $\M^+_{\vec{a}}\upharpoonright_L =\M$. Then
    \eqn{\big(\sup_{\vec{x}}\theta[\vec{x}/\vec{c}]\big)^\circ_T&=\sup\Big\{\big(\sup_{\vec{x}}\theta[\vec{x}/\vec{c}]\big)^{\M}:\mathfrak{M}\vDash T \Big\}\\
    &=\sup\Big\{\sup_{\{\sigma(\vec{x}\mapsto\vec{a}):\vec{a}\in|\M|\}}\big(\theta[\vec{x}/\vec{c}]\big)^{\M,\sigma(\vec{x}\mapsto\vec{a})}:\mathfrak{M}\vDash T\Big\}\\
    &=\sup\Big\{\big(\theta[\vec{x}/\vec{c}]\big)^{\M,\sigma(\vec{x}\mapsto\vec{a})}:\mathfrak{M},\sigma(\vec{x}\mapsto\vec{a})\vDash T, \ \vec{a}\in |\M|\Big\}\\
    &\leq \sup\Big\{\big(\theta[\vec{x}/\vec{c}]\big)^{\M^+_{\vec{a}},\sigma(\vec{x}\mapsto\vec{a})}:\M^+_{\vec{a}},\sigma(\vec{x}\mapsto\vec{a})\vDash T, \ \vec{a}\in |\M| \Big\}\\
    &=\sup\Big\{\theta^{\M^+_{\vec{a}},\sigma}:\M^+_{\vec{a}},\sigma\vDash T \Big\}\\
    &\leq \sup\Big\{\theta^{\M^+,\sigma}:\M^+,\sigma\vDash T \Big\}\\
    &=\theta^\circ_T.}
    Now notice that for any $L^+$-structure $\M^+$ and assignment $\sigma$, there is an assignment $\sigma(\vec{x}\mapsto \vec{c}^{\M^+_{\vec{a}}})$. Then the $L$-structure $\M^+\upharpoonright_L$ is such that $\theta^{\M^+,\sigma}=\big(\theta[\vec{x}/\vec{c}]\big)^{\M^+\upharpoonright_L,\sigma(\vec{x}\mapsto \vec{c}^{\M^+_{\vec{a}}})}$. It follows that
    \eqn{\theta^\circ_T&=\sup\Big\{\theta^{\M^+,\sigma}:\M^+,\sigma\vDash T \Big\}\\
    &= \sup\Big\{\big(\theta[\vec{x}/\vec{c}]\big)^{\M^+\upharpoonright_L,\sigma(\vec{x}\mapsto \vec{c}^{\M^+_{\vec{a}}})}:\M^+\upharpoonright_L,\sigma(\vec{x}\mapsto \vec{c}^{\M^+_{\vec{a}}})\vDash T \Big\}\\
    &\leq \sup\Big\{\big(\theta[\vec{x}/\vec{c}]\big)^{\M,\sigma}:\mathfrak{M},\sigma\vDash T\Big\}\\
    &=\big(\theta[\vec{x}/\vec{c}]\big)^\circ_T.}
    But by Proposition \ref{prop:supequiv}, $\big(\theta[\vec{x}/\vec{c}]\big)^\circ_T=\big(\sup_{\vec{x}}\theta[\vec{x}/\vec{c}]\big)^\circ_T$. Therefore, $\theta^\circ_T\leq \big(\sup_{\vec{x}}\theta[\vec{x}/\vec{c}]\big)^\circ_T$. The claim follows.
\end{proof}

\subsection{Effectively extending theories}\label{sub:dec}

Since any $L$-theory $T$ has an associated degree of truth map $\cdot^\circ_T$, to analyze the effectiveness of a theory we will actually consider the effectiveness of the related degree of truth map. The following definition is given in \cite{BenYaacov.Pedersen.2010}.

\begin{definition}
    An $L$-theory $T$ is \defn{decidable} if $\cdot^\circ_T$ is a computable map from the set of wffs to $[0,1]$.
\end{definition}

Since there are uncountably-many such maps, we introduce a naming system.

\begin{definition}
Given an $L$-theory $T$, we say that $X\in \mathbb{N}^\mathbb{N}$ is a \emph{name} of $T$ if the following hold.
\begin{itemize}
    \item For every $n,k\in\mathbb{N}$, there is some $m\in\mathbb{N}$ such that $\langle n,k,m \rangle\in \mathrm{ran}(X)$.
    \item For every $n,k,m\in\mathbb{N}$, if $\langle  n,k,m \rangle\in\mathrm{ran}(X)$, then $ q_m\in\big[ \ (\varphi_n)^\circ_T-2^{-k} \ , \ (\varphi_n)^\circ_T+2^{-k} \ \big]$.
\end{itemize}
\end{definition}

\begin{proposition}
    An $L$-theory is decidable if and only if it has a computable name.
\end{proposition}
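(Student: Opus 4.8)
The statement to prove is a biconditional: an $L$-theory $T$ is decidable (i.e., $\cdot^\circ_T$ is a computable map from wffs to $[0,1]$) if and only if $T$ has a computable name. The plan is to unwind both definitions and observe that a name of $T$ is essentially just a reorganized enumeration of rational approximations to the values $(\varphi_n)^\circ_T$, where $(\varphi_n)_{n\in\mathbb{N}}$ is the fixed effective enumeration of wffs guaranteed by item \ref{it:eff} of Remark \ref{rem:morecon}.

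For the forward direction, suppose $T$ is decidable, so there is an effective procedure which, given a code of a wff and $k\in\mathbb{N}$, outputs a rational $q$ with $|(\varphi)^\circ_T - q| < 2^{-k}$. I would build a computable $X\in\mathbb{N}^\mathbb{N}$ by dovetailing: for each pair $(n,k)$, run the decision procedure on input $(\varphi_n, k)$ to get a rational $q_m$ (where $q_m$ denotes the $m$-th rational under a fixed effective enumeration of $\mathbb{Q}$) with $|q_m - (\varphi_n)^\circ_T| < 2^{-k} \le 2^{-k}$, hence $q_m \in [(\varphi_n)^\circ_T - 2^{-k}, (\varphi_n)^\circ_T + 2^{-k}]$, and let $X$ enumerate $\langle n, k, m\rangle$. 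Since the procedure halts on every input, for every $n,k$ some triple $\langle n,k,m\rangle$ enters $\ran(X)$, so the first clause of the definition of a name is met; and by construction the second clause holds. Thus $X$ is a computable name of $T$.

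For the reverse direction, suppose $X$ is a computable name of $T$. Given a wff $\varphi$ and $k\in\mathbb{N}$, first compute $n$ with $\varphi = \varphi_n$ (using the effective enumeration of wffs; strictly, one searches for the code $n$ of $\varphi$). Then search through $\ran(X)$ — which is computably enumerable since $X$ is computable — for some $m$ with $\langle n, k, m\rangle \in \ran(X)$; the first clause of the definition guarantees such a triple exists, so the search terminates. Output $q_m$. By the second clause, $|q_m - (\varphi_n)^\circ_T| \le 2^{-k}$. This gives an approximation within $2^{-k}$ rather than strictly less than $2^{-k}$, but replacing $k$ by $k+1$ throughout yields the strict inequality required by the definition of a computable map, so $\cdot^\circ_T$ is computable and $T$ is decidable.

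Neither direction presents a genuine obstacle; this proposition is a bookkeeping lemma reconciling two presentations of the same data. The only point requiring minor care is the off-by-one between the closed-interval bound $|q_m - (\varphi_n)^\circ_T| \le 2^{-k}$ in the definition of a name and the strict bound $|r - q| < 2^{-k}$ in the definition of a computable real, which is handled by shifting the precision parameter by one. One should also note explicitly that searching the range of a total computable function is an effective (c.e.) operation, and that locating the code of a given wff is effective by the standing assumption that the signature is effectively numbered and hence the wffs are effectively enumerated.
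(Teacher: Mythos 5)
Your proof is correct and follows essentially the same route as the paper's: reorganize the outputs of the decision procedure into triples to build the name, and search the range of the name to recover the decision procedure. Your extra care with the off-by-one between the strict inequality in the definition of a computable map and the closed-interval condition in the definition of a name is a detail the paper glosses over, and handling it by shifting the precision parameter is the right fix.
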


\begin{proof}
    For the forward direction, suppose $\cdot^\circ_T$ is computable. Fix a witness of this computability. For every code of a pair, define $X(\langle n,k \rangle):=\langle n,k,m \rangle$ where the witness outputs $q_m$, given $\varphi_n$ and precision parameter $k$. On every natural number which doesn't code a pair, let $X$ be $0$. For the reverse direction, suppose $X$ is a computable name. Given $\varphi_n$ and a precision parameter $k$, begin computing $\mathrm{ran}(X)$ until a code of a triple of the form $\langle n,k,m \rangle$ is output. It follows that $q_m\in\big[ \ (\varphi_n)^\circ_T-2^{-k} \ , \ (\varphi_n)^\circ_T+2^{-k} \ \big]$.
\end{proof}

Let $L^+$ be the Henkin extended signature effectively given by Lemma \ref{lem:sigext}, and let $(\theta_n)_{n\in\mathbb{N}}$ be an effective enumeration of the $L^+$-wffs. The next lemma we present is similar to Lemma 4.7 in \cite{Calvert.2011}. However, in our case, the construction is with respect to any \emph{name} of an $L$-theory, $X\in \mathbb{N}^\mathbb{N}$. Moreover, careful consideration is taken with respect to when two $L^+$-sentences are provably equivalent with respect to a given $L^+$-theory.

The basic idea of the proof is the following. Given the degree of truth of a theory $\cdot^\circ_T$, find the first $L^+$-wff of the form $\varphi\dotsub \psi$ which has a \emph{strictly positive} degree of truth. It follows that there is some structure $\M$ and assignment $\sigma$ such that $\M,\sigma \vDash T$ while $\M,\sigma \nvDash \varphi\dotsub \psi$. Hence $\M,\sigma \vDash T\cup\{\psi\dotsub \varphi\}$. Moreover, $\cdot^\circ_{T\cup\{\psi\dotsub \varphi\}}$ is shown to be effective in $\cdot^\circ_T$. Thus we may effectively complete $T$ as an $L^+$-theory.

\begin{lemma}\label{lem:maxconsset}
    There is an effective procedure which given $X$, a name of an $L$-theory $T$, outputs $\Phi(X)\subseteq \N$ such that $T\cup\{\theta_n:n\in \Phi(X)\}$ is consistent, and for every pair of $L^+$-wffs $\varphi$ and $\psi$, either $\varphi$ and $\psi$ are provably equivalent with respect to $T\cup\{\theta_n:n\in \Phi(X)\}$, or exactly one of $\varphi\dotsub \psi$ or $\psi\dotsub\varphi$ is in $\{\theta_n:n\in \Phi(X)\}$.
\end{lemma}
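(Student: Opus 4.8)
The plan is to carry out a standard Lindenbaum-style enumeration of the $L^+$-wffs, but performed \emph{effectively} relative to the name $X$ and with the extra bookkeeping forced by the limiting-behavior condition in Definition \ref{def:maxcon}. Fix effective enumerations: one $(\theta_n)_{n\in\N}$ of all $L^+$-wffs (already given), and one of all pairs $(\varphi,\psi)$ of $L^+$-wffs. We build $\Phi(X)$ in stages, maintaining after stage $s$ a finite set $B_s$ of $L^+$-wffs (the ones so far decided to be thrown in) with $T\cup B_s$ consistent. We must be careful that ``$T\cup B_s$ is consistent'' is something we can detect from $X$: by Corollary \ref{prop:circequiv} and Proposition \ref{prop:eqone}, $T\cup B_s$ is consistent iff $\bigl(\chi\dotsub\bigvee_{\theta\in B_s}\theta\bigr)^\circ_T=1$ for \emph{some} (equivalently, cofinitely many) wff $\chi$ of the form $m\,\chi_0$; and degrees of truth of $L^+$-wffs relative to $T$ are approximable from $X$ once we push $L^+$-constants out as $\sup$-quantified variables via Proposition \ref{lem:repeq} (together with Proposition \ref{prop:supequiv}). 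So the running hypothesis ``$T\cup B_s$ consistent'' is maintained as an invariant, not tested.

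\textbf{Stage $s$.} Consider the $s$-th pair $(\varphi,\psi)$. We want to decide, effectively from $X$ (and $B_s$), between three outcomes: (a) $\varphi$ and $\psi$ are provably equivalent over $T\cup B_s$ — equivalently over the eventual theory, which by the invariant and Lemma \ref{lem:thisorthat} we will arrange to be the case exactly when $(\varphi\bic\psi)^\circ_{T\cup B_s}=0$; (b) put $\varphi\dotsub\psi$ into $B$; (c) put $\psi\dotsub\varphi$ into $B$. The key computation is the following: using Proposition \ref{prop:circgeq} we know $(\varphi\dotsub\psi)^\circ_{T\cup B_s}\le \bigl((\varphi\dotsub\psi)\dotsub\bigvee_{\theta\in B_s}\theta\bigr)^\circ_T$, and conversely one checks (the same argument as Proposition \ref{prop:circgeq}, run backwards, using that any model of $T\cup B_s$ kills $\bigvee_{\theta\in B_s}\theta$) that in fact equality holds, so $(\varphi\dotsub\psi)^\circ_{T\cup B_s}$ is computable from $X$. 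Now run two searches in parallel: search for a stage confirming $(\varphi\dotsub\psi)^\circ_{T\cup B_s}>0$ (via a rational lower bound obtained from $X$) and a stage confirming $(\psi\dotsub\varphi)^\circ_{T\cup B_s}>0$. By Lemma \ref{lem:thisorthat}, at least one of $T\cup B_s\cup\{\varphi\dotsub\psi\}$, $T\cup B_s\cup\{\psi\dotsub\varphi\}$ is consistent, so at least one of these positivity facts holds \emph{unless} both $(\varphi\dotsub\psi)^\circ_{T\cup B_s}=0$ and $(\psi\dotsub\varphi)^\circ_{T\cup B_s}=0$, i.e. $(\varphi\bic\psi)^\circ_{T\cup B_s}=0$, which is outcome (a) and requires no action. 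If the first search returns first, set $B_{s+1}=B_s\cup\{\psi\dotsub\varphi\}$ (note: the \emph{positive} degree of $\varphi\dotsub\psi$ witnesses a model of $T\cup B_s$ satisfying $\psi\dotsub\varphi$, so this preserves consistency and the positive degree of truth of $\varphi\dotsub\psi$ certifies that $\psi\dotsub\varphi$ is the one that can consistently be added); symmetrically if the second returns first. If neither returns, we are in case (a) and set $B_{s+1}=B_s$. Finally let $\Phi(X)=\{n : \theta_n\in\bigcup_s B_s\}$; this set is computable from $X$ because membership is decided at a finite stage.

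\textbf{Why the three outcomes are genuinely exclusive and exhaustive.} Exhaustiveness: by Lemma \ref{lem:thisorthat} applied to $T\cup B_s$, if $(\varphi\bic\psi)^\circ_{T\cup B_s}\ne 0$ then one of the two additions is consistent and hence has positive degree of truth, so one of the two searches halts — and since we are searching for a \emph{rational lower bound strictly above $0$}, which exists for a strictly positive computable real, the search does halt. Exclusiveness: we add at most one of $\varphi\dotsub\psi$, $\psi\dotsub\varphi$ at stage $s$; and if $\varphi,\psi$ are provably equivalent over the final theory then neither was added (provable equivalence forces $(\varphi\bic\psi)^\circ=0$, hence both degrees $0$, so neither search halted). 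Consistency of $T\cup\{\theta_n:n\in\Phi(X)\}$: it is the union of the increasing chain $T\cup B_s$, each consistent by the maintained invariant, and consistency of $L^+$-theories is finitary (an inconsistency derivation uses finitely many premises), so the union is consistent — here one invokes the Deduction Theorem and the definition of consistency as in \cite{BenYaacov.Pedersen.2010}.

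\textbf{Main obstacle.} The delicate point is the effectivity of ``$(\varphi\dotsub\psi)^\circ_{T\cup B_s}$ is strictly positive,'' since a computable real can be $0$ without this being confirmable; the escape is that we never need to \emph{decide} positivity — in the exhaustive case one of two positivities is guaranteed and hence confirmable, and in the non-exhaustive (provably-equivalent) case inaction is correct, so dovetailing the two semidecision procedures suffices. The second delicate point is reducing degrees of truth of $L^+$-wffs over $T$ (an $L$-theory whose name we hold) to the given name $X$: this is exactly what Propositions \ref{prop:circgeq}, \ref{prop:eqone}, \ref{prop:supequiv}, and \ref{lem:repeq} were set up to handle — $\bigvee_{\theta\in B_s}\theta$ is a single $L^+$-wff, its $L^+$-constants are replaced by fresh $\sup$-bound variables without changing the degree of truth over $T$, and then a degree of truth of an $L^+$-wff over the $L$-theory $T$ collapses to a degree of truth of an $L$-wff over $T$, approximable from $X$ via Corollary \ref{prop:circequiv}. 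Assembling these is routine bookkeeping once the stage construction above is in place.
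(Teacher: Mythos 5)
Your overall architecture (a stagewise Lindenbaum construction driven by degree-of-truth approximations from $X$) matches the paper's, and your semantic observation that a confirmed $(\varphi\dotsub\psi)^\circ_{T\cup B_s}>0$ yields a model of $T\cup B_s$ satisfying $\psi\dotsub\varphi$ is correct. But there are two genuine gaps. First, the equality $(\varphi\dotsub\psi)^\circ_{T\cup B_s}=\bigl((\varphi\dotsub\psi)\dotsub\bigvee_{\theta\in B_s}\theta\bigr)^\circ_T$, on which your entire effectivity claim rests, is false: Proposition \ref{prop:circgeq} gives only $\leq$, and the ``run backwards'' argument breaks because the supremum on the right ranges over all models of $T$, most of which do not satisfy $B_s$. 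A model of $T$ in which $\bigvee_{\theta\in B_s}\theta$ is small but nonzero can make the right-hand side large while every model of $T\cup B_s$ forces $\varphi\dotsub\psi$ to be zero; for instance, with $B_s=\{\half\chi\}$ and a pair for which $(\varphi\dotsub\psi)^{\M}=\chi^{\M}$ (say $\varphi=\neg\neg\chi$, $\psi=\underline{0}$), the left side is $0$ while the right side is $\tfrac{1}{2}\sup\{\chi^{\M}:\M\vDash T\}$. Consequently $(\cdot)^\circ_{T\cup B_s}$ is not shown to be computable from $X$, and a confirmed positive value of the genuinely computable upper bound does not certify the existence of the model of $T\cup B_s$ that your consistency argument requires. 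Second, your procedure stalls: when the $s$-th pair is provably equivalent over $T\cup B_s$, both degrees are $0$, neither semidecision procedure halts, and ``if neither returns, set $B_{s+1}=B_s$'' is not an action a Turing machine can take. Stage $s+1$ is then never reached, later pairs are never decided, and the conclusion of the lemma fails for them; this is a failure of correctness, not merely of elegance.

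The paper's construction is organized precisely to sidestep both issues: it never attempts to compute $(\cdot)^\circ_{T\cup B_s}$ or to decide positivity. At stage $s+1$ it searches over \emph{all} pairs for the first one whose computable quantity $\bigl((\psi\dotsub\varphi)\dotsub\bigvee_{n\in\Phi_s(X)}\theta_n\bigr)^\circ_T$ (turned into an $L$-sentence via Propositions \ref{prop:supequiv} and \ref{lem:repeq}, hence approximable from $X$) exceeds a stage-dependent threshold $2^{-(s+1)}$. Proposition \ref{prop:eqone} guarantees some pair always passes, so every stage halts; and any pair that is not provably equivalent over the limit theory has this quantity bounded below by a fixed $2^{-(s+2)}$ (via Proposition \ref{prop:circgeq}), so it is eventually caught once the threshold drops below that bound. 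To repair your argument you would need to replace the exact positivity tests with such a shrinking-threshold search over all pairs, and to rework the consistency step so that it uses only the one-sided bound of Proposition \ref{prop:circgeq} rather than the false equality.
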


\begin{proof}
    We proceed via partial effective recursion. First define $\Phi_{0}(X):=\emptyset$, for every $X\in \mathbb{N}^\mathbb{N}$. As the recursive assumption, we suppose that at stage $s$, if $X$ is a name of an $L$-theory $T$, then $\Phi_s(X)$ is defined, finite, and $T\cup\{\theta_n:n\in \Phi_{s}(X)\}$ is consistent. At stage $s+1$, the following procedure attempts to construct $\Phi_{s+1}(X)$.
    
    For every pair of $L^+$-wffs $\varphi$ and $\psi$, define the real number \[r_{\varphi,\psi,f,s+1}:=\Big(\sup_{\vec{x},\vec{y}} \ \sup_{\vec{z}}\Big(\Big((\psi\dotsub\varphi)\dotsub \Big(\bigvee_{n\in \Phi_{s}(X)} \theta_n\Big)\Big)[\vec{z}/\vec{c}]\Big)\Big)_{T}^\circ,\]
    where $\vec{x}$ and $\vec{y}$ are the free variables appearing in $\psi\dotsub\varphi$ and $\bigvee_{n\in \Phi_{s}(X)} \theta_n$, respectively, $\vec{c}$ is the (possibly empty) tuple of constants from $L^+$ and not in $L$ appearing in $(\psi\dotsub\varphi)\dotsub \big(\bigvee_{n\in \Phi_{s}(X)} \theta_n\big)$, and $\vec{z}$ is a $|\vec{c}|$-tuple of variable symbols distinct from $\vec{x}$ and $\vec{y}$. Notably, these real numbers are computable in $f$, uniformly in $\varphi$, $\psi$, and $s$. To see this, notice that each recursively defined $\Phi_{s}(X)$ is finite, each free variable becomes bound by the quantifier, and every constant from $L^+$ not in $L$ is replaced by a variable and bound. Hence each formula checked above is actually an $L$-sentence, so $f$ can compute a rational approximation of $r_{\varphi,\psi,X,s+1}$ within $2^{-(s+2)}$. Call such a rational $q_{\varphi,\psi,X,s+1}$. Then search the pairs of $L^+$-wffs for the first pair $\varphi$ and $\psi$ such that $\varphi\dotsub \psi\notin \{\theta_n:n\in \Phi_{s}(X)\}$ and $q_{\varphi,\psi,X,s+1}\geq 2^{-(s+1)}$. By Proposition \ref{prop:eqone}, there are infinitely many $L^+$-wffs $\psi$ such that $r_{\underline{0},\psi,X,s+1}=1$, and hence such that $q_{\underline{0},\psi,X,s+1}\geq 2^{-(s+1)}$. Thus, when $X$ is a name of an $L$-theory, the procedure will halt. When such a pair $\varphi$ and $\psi$ is found, search the effective enumeration of the $L^+$-wffs for the index $m$ of $\varphi\dotsub \psi$ and define $\Phi_{s+1}(X):=\Phi_{s}(X)\cup \{m\}$. Clearly, if $\Phi_{s+1}(X)$ is defined, it is also finite, by construction. We now claim that this $\Phi$ witnesses the lemma. Fix a name of an $L$-theory $X\in \mathbb{N}^\mathbb{N}$.
    
    To see that ${T}\cup \{\theta_n:n\in \Phi(X)\}$ is consistent, we show that each ${T}\cup \{\theta_n:n\in \Phi_{s+1}(X)\}$ is consistent, for every $s\in\mathbb{N}$. We proceed inductively.
    
    Suppose ${T}\cup \{\theta_n:n\in \Phi_{s}(X)\}$ is consistent and fix $m\in \Phi_{s+1}(X)\setminus \Phi_{s}(X)$. By construction this $m$ is the index for $\varphi\dotsub \psi$ where $q_{\varphi,\psi,X,s+1}\geq 2^{-(s+1)}$. It follows by the definition of $q_{\varphi,\psi,X,s+1}$ and Propositions \ref{prop:supequiv} and \ref{lem:repeq}, Corollary \ref{prop:circequiv}, logical equivalence, and the Deduction Theorem, that we have each of the following.
    \eqn{\Big((\psi\dotsub\varphi)\dotsub& \Big(\bigvee_{n\in \Phi_{s}(X)} \theta_n\Big)\Big)_{T}^\circ \geq 2^{-(s+2)}\\
    &\implies \ \ \Big(\underline{2^{-(s+2)}}\dotsub \Big((\psi\dotsub\varphi)\dotsub \Big(\bigvee_{n\in \Phi_{s}(X)} \theta_n\Big)\Big)\Big)_{T}^\circ =0\\
    &\implies \ \ {T}\vdash \Big(\underline{2^{-(s+2)}}\dotsub \Big((\psi\dotsub\varphi)\dotsub \Big(\bigvee_{n\in \Phi_{s}(X)} \theta_n\Big)\Big)\Big)\dotsub \underline{2^{-(s+3)}}\\
    &\implies \ \ {T}\cup \{\theta_n:n\in \Phi_{s}(X)\}\cup\{\psi\dotsub \varphi\}\vdash \underline{2^{-(s+3)}}.}
    
    
    Therefore, ${T}\cup \{\theta_n:n\in \Phi_{s}(X)\}\cup\{\psi\dotsub \varphi\}$ is inconsistent. It follows by Lemma \ref{lem:thisorthat} that $\varphi\dotsub \psi$ is consistent with ${T}\cup \{\theta_n:n\in \Phi_{s}(X)\}$.
    
    We therefore need only show that for every pair of $L^+$-wffs $\varphi$ and $\psi$, either $\varphi$ and $\psi$ are provably equivalent with respect to $T\cup\{\theta_n:n\in \Phi(X)\}$, or exactly one of $\varphi\dotsub \psi$ or $\psi\dotsub\varphi$ is in $\{\theta_n:n\in \Phi(X)\}$. 
    
    Note that a pair of $L^+$-wffs $\varphi$ and $\psi$ is provably equivalent with respect to $T\cup\{\theta_n:n\in \Phi(X)\}$ if and only if for every $s\in\mathbb{N}$, there is some $S\in\mathbb{N}$ such that
    \[T\cup\{\theta_n:n\in \Phi_{S}(X)\}\vdash (\varphi\dotsub \psi)\dotsub \underline{2^{-(s+2)}} \ \ \ \ \text{ and } \ \ \ \ T\cup\{\theta_n:n\in \Phi_{s}(X)\}\vdash (\psi\dotsub \varphi)\dotsub \underline{2^{-(s+2)}}.\]
    Now fix a pair of $L^+$-wffs $\varphi$ and $\psi$ that are not provably equivalent with respect to $T\cup\{\theta_n:n\in \Phi(X)\}$. Then there must be some $s\in\mathbb{N}$ such that for every $S\in\mathbb{N}$, either
    \[T\cup\{\theta_n:n\in \Phi_{S}(X)\}\nvdash (\varphi\dotsub \psi)\dotsub \underline{2^{-(s+2)}} \ \ \ \ \text{ or } \ \ \ \ T\cup\{\theta_n:n\in \Phi_{S}(X)\}\nvdash (\psi\dotsub \varphi)\dotsub \underline{2^{-(s+2)}}.\]
    At least one of these two cases must hold for infinitely many $S\in\mathbb{N}$. Without loss of generality, since the cases are symmetric, suppose it is the latter. It follows by Corollary \ref{prop:circequiv} that for every $S\in\mathbb{N}$, $(\psi\dotsub \varphi)^\circ_{T\cup\{\theta_n:n\in \Phi_{S}(X)\}}\geq 2^{-(s+2)}.$ Hence by Proposition \ref{prop:circgeq}, for every $S\in\mathbb{N}$,
    \[\Big((\psi\dotsub\varphi)\dotsub \Big(\bigvee_{n\in\Phi_{S}(X)} \theta_n\Big)\Big)_{T}^\circ \geq 2^{-(s+2)}.\]
    Then by Propositions \ref{prop:supequiv} and \ref{lem:repeq}, for every $S\in\mathbb{N}$, $r_{\varphi,\psi,X,S+1}\geq 2^{-(s+2)}$.
    Thus for some $S\geq s+2$, $\varphi$ and $\psi$ will have to be the first pair such that $\varphi\dotsub \psi\notin \{\theta_n:n\in \Phi_{S}(X)\}$ and $q_{\varphi,\psi,X,S+1}\geq 2^{-(s+3)}\geq 2^{-(S+1)}$. It follows that the procedure will place the index for $\varphi\dotsub \psi$ into $\Phi_{S+1}(X)$, so $\varphi\dotsub \psi\in \{\theta_n:n\in \Phi(X)\}$.
\end{proof}

It should be noted that if $T$ is not complete, a name of $T$ does not specify a \emph{unique} consistent extension of $T$. The above procedure constructs a complete extension, which itself has a unique maximally consistent extension, but the procedure is dependent on the enumeration of the $L^+$-wffs. When that enumeration changes, if $T$ is not a complete theory, the above extension of $T$ may also change.

\subsection{Generalized effective completeness}\label{sub:eff}

We now come to our main result.

\begin{theorem}[Generalized Effective Completeness]\label{thm:geneffcomp}
    There is an effective procedure which, given a name $X\in \N^\N$ of an $L$-theory $T$, produces a presentation of an $L^+$-structure $\M$ such that $\M\vDash T$.
\end{theorem}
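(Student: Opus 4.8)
The plan is to carry out an effective version of the Henkin construction from Section~\ref{sec:completeres}, with the completion of the theory supplied by Lemma~\ref{lem:maxconsset}. Given a name $X$ of $T$, first apply Lemma~\ref{lem:maxconsset} to produce $\Phi(X)$, and set $T^* := T \cup \{\theta_n : n \in \Phi(X)\}$. This $T^*$ is consistent and, for every pair of $L^+$-wffs, either decides their $\dotsub$-difference one way or the other or makes them provably equivalent; in particular $\cdot^\circ_{T^*}$ is computable relative to $X$ (each value $\varphi^\circ_{T^*}$ can be computed from $\varphi^\circ_T$ using Proposition~\ref{prop:circgeq}, the Deduction Theorem, and the fact that $\Phi(X)$ is enumerable with finite initial segments, exactly as in the proof of the lemma). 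I would then want to pass from $T^*$ to a maximally consistent, Henkin complete $\Gamma \supseteq T^*$ so that Theorem~\ref{prop:henksat} gives $\M_\Gamma \vDash \Gamma \supseteq T$. The point is that the construction of $\M_\Gamma$ is built out of the numbers $P^{\M_\Gamma'}(t_0,\dots,t_{\eta(P)-1}) = \sup\{p : \p \in \dyad,\ \p \dotsub P(\vec t)\in\Gamma\}$, and these are exactly $\big(P(\vec t)\big)^\circ_\Gamma = \big(P(\vec t)\big)^\circledcirc_\Gamma$ by Corollary~\ref{prop:circequiv}, so everything reduces to computing a degree-of-truth/degree-of-provability map.

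The key steps, in order: (1) From $X$, compute (an index for) $\cdot^\circ_{T^*}$ relative to $X$. (2) Argue that the completion and Henkin-completion used in Theorem~\ref{thm:gammaext} can be performed, at the level of the associated degree-of-truth map, effectively in $\cdot^\circ_{T^*}$. For the Henkin-completion part this is clean: adding the Henkin axioms $\big(\sup_x\varphi\dotsub\q\big)\wedge\big(\p\dotsub\varphi[c_{\varphi,x,\p,\q}/x]\big)$ to the theory does not change the degree of truth of any $L$-sentence (one can always realize the witnesses in a model), and Proposition~\ref{lem:repeq} lets us translate $L^+$-sentences mentioning fresh constants back to $L$-sentences. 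For the maximal-consistency (completeness) part one reuses the machinery of Lemma~\ref{lem:maxconsset} — indeed $T^*$ is already "complete enough" in the sense of deciding every $\varphi\dotsub\psi$ modulo provable equivalence, which is precisely what is needed to pin down a model. (3) Define $g:\N\to|\M_\Gamma|$ by letting $g$ enumerate the (closed) terms of $L^+$, composed with the elementary morphism of Theorem~\ref{thm:metcomp} into the completion $\M_\Gamma$; the algebra generated by $\ran(g)$ contains all closed terms and hence is dense by construction of $\M_\Gamma$. (4) Verify that $(\M_\Gamma, g)$ is a presentation whose defining data — the values of predicates on rational points, i.e.\ on closed terms $P(t_0,\dots,t_{\eta(P)-1})$ — are uniformly computable relative to $X$, since each such value equals $\big(P(\vec t)\big)^\circledcirc_\Gamma$ and $\cdot^\circ_\Gamma$ is computable relative to $X$ by steps (1)--(2). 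This gives a presentation index Turing-reducible to $X$, and $\M_\Gamma \vDash T$ by Theorem~\ref{prop:henksat}.

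The main obstacle, I expect, is step~(2): making precise, and verifying, that the degree-of-truth map of the maximally consistent Henkin complete $\Gamma$ is computable relative to $\cdot^\circ_{T^*}$ — i.e.\ that the (generally non-unique, enumeration-dependent) extension produced by iterating the idea of Lemma~\ref{lem:maxconsset}, together with throwing in the Henkin axioms, still yields a computable $\cdot^\circ_\Gamma$. The subtlety is bookkeeping: $\Gamma$ itself is only enumerable (not decidable) relative to $X$, but for the presentation we do not need $\Gamma$ to be decidable — we only need the real numbers $\big(P(\vec t)\big)^\circledcirc_\Gamma$, and each of these is an infimum over an enumerable set of dyadic upper bounds, which a priori gives only a \emph{left}-computable (lower-semicomputable) real. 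The fix is to use the two-sided control already present in Lemma~\ref{lem:maxconsset}: because $\Gamma$ decides, up to provable equivalence, every $\varphi\dotsub\psi$, one also gets matching upper bounds on $\big(P(\vec t)\big)^\circledcirc_\Gamma$ from $\big(\underline{q}\dotsub P(\vec t)\big)^\circledcirc_\Gamma$ for dyadic $q$, so the value is genuinely computable, uniformly, relative to $X$. I would therefore spend the bulk of the proof isolating the statement "there is an effective procedure which, given $X$, produces an index for $\cdot^\circ_\Gamma$ relative to $X$ for some maximally consistent Henkin complete $\Gamma \supseteq T$" and then reading off the presentation and its index from $\M_\Gamma$ more or less mechanically.
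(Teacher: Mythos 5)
Your proposal follows essentially the same route as the paper's proof: Lemma \ref{lem:maxconsset} to complete the theory, Theorem \ref{thm:gammaext} and Theorem \ref{prop:henksat} for the Henkin model, an effective enumeration of terms for the presentation, and two-sided dyadic sandwiching read off from the stages $\Phi_s(X)$ to compute the predicates on rational points --- and the narrowing you arrive at in your last paragraph (one only needs the values $\big(P(\vec t\,)\big)^{\M_\Gamma}$ on rational points, not all of $\cdot^\circ_\Gamma$, and these are pinned down from both sides because $\Phi(X)$ decides $\p\dotsub P(\vec t\,)$ versus $P(\vec t\,)\dotsub\p$ for all but at most one dyadic $\p$ in any finite grid) is exactly how the paper resolves it. The one assertion to repair is that the closed terms are dense ``by construction'': $|\M_\Gamma|$ is the completion of the set of \emph{all} terms, open ones included, so either put the variable symbols into $\ran(g)$ alongside the constants (as the paper does) or supply the short argument that Henkin witnesses for formulas of the form $\neg\,\d(x,t)$ make the closed terms dense.
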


\begin{proof}
    Compute $L^+$ as in Lemma \ref{lem:sigext}. Given a name of an $L$-theory $X\in \mathbb{N}^\mathbb{N}$, let $\Phi(X)$ be as in Lemma \ref{lem:maxconsset}. Then, by Theorem \ref{thm:gammaext}, extend $T\cup\{\theta_n:n\in \Phi(X)\}$ to a maximally consistent, Henkin complete $L^+$-theory $\Gamma$. By Proposition \ref{prop:henksat}, $\M_\Gamma\vDash T$.
    
    Since $L^+$ is effectively numbered, the set of constants of $L^+$ is also effectively numbered, which we may effectively join to an effective numbering of the variable symbols. Let $g'$ be such an effective numbering. Then, for every $n\in\mathbb{N}$, define $g(n):=[g'(n)]$, the equivalence class of $g'(n)$ in $|\M_\Gamma|$. By construction, the algebra generated by $\mathrm{ran}(g)$ in $\M_\Gamma$ is the set of all equivalence classes of terms of $L^+$, that is, equivalence classes of the elements of $|\M_\Gamma'|$. It follows that this algebra is dense in $|\M_\Gamma|$, since by construction $|\M_\Gamma|$ is the metric completion of $|\M_\Gamma'|$. Thus $\big(\M_\Gamma,g\big)$ is a presentation of $\M_\Gamma$. We further claim that $\big(\M_\Gamma,g\big)$ is a computable presentation.
    
    Fix a code of an $N$-ary predicate symbol $P$, codes of rational points $[t_0],...,[t_{N-1}]$, and a precision parameter $k\in\mathbb{N}$. From these, use $g'$ to decode $L^+$-terms $t_0,...,t_{N-1}$ corresponding to $[t_0],...,[t_{N-1}]$.
    Then execute the following.
    
    Compute the finite set $D=\big\{\p\in\dyad:\text{ the denominator of } p \text{ is less than }2^{k+2}\big\}$. By the construction of $\Phi(X)$,  with access to an oracle that computes $X$ we may compute the least $M\geq k+2$ such that for all but one $\p\in D$, exactly one of $\p\dotsub P(t_0,...,t_{N-1})$ or $P(t_0,...,t_{N-1})\dotsub \p$ is in $\{\theta_n:n\in \Phi_{M+1}(X)\}$.\footnote{It may be that for some $\p\in D$, for every $M\in\mathbb{N}$, $\big(P(t_0,...,t_{N-1})\dotsub \bigvee_{n\in \Phi_{M}(X)} \theta_n \big)^\circ_{T}$ and $p$ differ by less than $2^{-(M+2)}$. This can only occur if $P(t_0,...,t_{N-1})$ and $\p$ are provably equivalent with respect to $T\cup\{\theta_n:n\in \Phi(X)\}$, which can happen for at most one $\p\in D$, since $T\cup\{\theta_n:n\in \Phi(X)\}$ is consistent.} Then compute the finite set $E=\big\{\p\in D:\p\dotsub P(t_0,...,t_{N-1})\in \{\theta_n:n\in \Phi_{M+1}(X)\}\big\}$. Notice that by construction 
    \[\underline{\max_{\p\in E}p}\dotsub P(t_0,...,t_{N-1})\in \Gamma \ \ \ \ \text{ and } \ \ \ \ P(t_0,...,t_{N-1})\dotsub \big(\underline{\min_{\p\in D\setminus E}p}\dotplus \underline{2^{-(k+2)}}\big)\in \Gamma.\]
    Therefore,
    \[\M_\Gamma\vDash \underline{\max_{\p\in E}p}\dotsub P(t_0,...,t_{N-1}) \ \ \ \ \text{ and } \ \ \ \ \M_\Gamma\vDash P(t_0,...,t_{N-1})\dotsub \Big(\underline{\min_{\p\in D\setminus E}p}\dotplus \underline{2^{-(k+2)}}\Big).\]
    It follows that
    \[ \max_{\p\in E}p\ \leq \  \big(P(t_0,...,t_{N-1})\big)^{\M_\Gamma} \ \leq \ \Big(\min_{\p\in D\setminus E}p+ 2^{-(k+2)}\Big).\]
    This implies that
    \[\big(P(t_0,...,t_{N-1})\big)^{\M_\Gamma} \ \in \ \Big[ \ \Big(\min_{\p\in D\setminus E}p- 2^{-(k+1)}\Big) \ , \ \Big(\min_{\p\in D\setminus E}p+ 2^{-(k+1)}\Big) \ \Big].\]

    Note lastly that this procedure was uniform in $X\in \mathbb{N}^\mathbb{N}$.
\end{proof}

It follows that the presentation given by the above theorem is Turing reducible to the name input. Hence if the name given is computable (meaning the theory is decidable), the presentation produced is also computable. Standard effective completeness then comes as a corollary.

\begin{corollary}[Effective Completeness of Continuous Logic]\label{thm:eff}
    Every decidable theory is modeled by a computably presentable structure.
\end{corollary}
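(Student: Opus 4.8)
The plan is to read this off Theorem~\ref{thm:geneffcomp} with essentially no extra work. First I would invoke the equivalence (established just before Lemma~\ref{lem:maxconsset}) between decidability of an $L$-theory and the existence of a computable name for it: given a decidable theory $T$, fix a computable $X \in \N^\N$ that is a name of $T$. Feeding $X$ into the effective procedure of Theorem~\ref{thm:geneffcomp} produces a presentation $(\M_\Gamma, g)$ of an $L^+$-structure $\M_\Gamma$ with $\M_\Gamma \vDash T$ (the satisfaction part coming through Theorem~\ref{thm:gammaext} and Proposition~\ref{prop:henksat}). It then remains only to check that, when $X$ happens to be computable, the presentation produced is itself computable rather than merely $X$-computable.

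For that I would point to the fact, noted at the end of the proof of Theorem~\ref{thm:geneffcomp}, that the whole construction is \emph{uniform} in $X \in \N^\N$ and uses $X$ only as an oracle: the computation of $\Phi(X)$ in Lemma~\ref{lem:maxconsset} is a partial effective recursion relative to $X$, the Henkin extension of Theorem~\ref{thm:gammaext} is fixed independently of $X$, and the predicate-evaluation routine in the proof of Theorem~\ref{thm:geneffcomp} queries $X$ only to locate the least stage $M$ at which the relevant dyadic comparisons $\p \dotsub P(t_0,\dots,t_{N-1})$ or $P(t_0,\dots,t_{N-1}) \dotsub \p$ appear in $\{\theta_n : n \in \Phi_{M+1}(X)\}$. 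When $X$ is a computable function, each such oracle call is an honest computation, uniformly, so the Turing machine witnessing that the predicates of $\M_\Gamma$ are uniformly computable on the rational points $\Q(\M_\Gamma^\sharp)$ becomes an unrelativized one, and an index for it is obtained effectively from an index for $X$. Hence $(\M_\Gamma, g)$ is a computable presentation, and $\M_\Gamma$ is a computably presentable structure modeling $T$.

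The only point demanding any care — and the closest thing here to an obstacle — is precisely this relativization bookkeeping: one must be sure that every appeal to $X$ along the pipeline (Lemma~\ref{lem:maxconsset}, then the construction of $\Gamma$, then the computation of the sets $D$, $E$ and the stage $M$) is genuinely an oracle use made uniformly in $X$, so that substituting a computable $X$ collapses the Turing reduction to an algorithm. Since the proof of Theorem~\ref{thm:geneffcomp} already establishes exactly this uniformity, the corollary follows, and no new model theory or computability theory is required.
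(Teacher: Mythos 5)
Your proposal is correct and follows exactly the paper's route: the paper derives the corollary from the uniformity of the procedure in Theorem~\ref{thm:geneffcomp} (so the presentation is Turing reducible to the name) together with the equivalence between decidability and possession of a computable name. Your additional attention to the relativization bookkeeping is sound but adds nothing beyond what the paper already asserts.
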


\subsection*{Acknowledgement}

I would like to thank Timothy H. McNicholl for his generous advice, comments, critique, and encouragement on the completion of this paper.

\include{thebibliography}

\bibliographystyle{plain}
\bibliography{mybib2}

\end{document}